 \newtheorem{thm}{Theorem}[section]
 \newtheorem{lem}[thm]{Lemma}
 \newtheorem{defn}[thm]{Definition}
 \newtheorem{rem}[thm]{Remark}
 \newtheorem{prop}[thm]{Proposition}
 \newtheorem{cor}[thm]{Corollary}
\newcommand{\be}{\begin{equation}}
\newcommand{\cl}[1]{\ensuremath{Cl(#1)}} 
\newcommand{\vect}[1]{\ensuremath{\mbox{\textbf{\textit{#1}}}}}
  \newcommand{\field}[1]{\mathbb{#1}}
  \newcommand{\C}{\field{C}}
  \newcommand{\R}{\field{R}}  
  \newcommand{\Z}{\field{Z}} 
  \newcommand{\HQ}{\field{H}}
  \renewcommand{\H}{\ensuremath{\mathbb{H}}\xspace}
  \newcommand{\svect}[1]{\ensuremath{\mbox{\textbf{\textit{\small #1}}}}}
  \newcommand{\bvect}[1]{\vect{#1}}
  \newcommand{\bomega}{\boldsymbol{\omega}}
  \newcommand{\A}{\mathcal{A}}
\newcommand{\M}{\mathcal{M}}
\newcommand{\Aut}{\mathrm{Aut}}
\newcommand{\Inn}{\mathrm{Inn}}
\newcommand{\G}{\mathrm{G}}
\newcommand{\Spec}{\mathrm{Spec}}
\newcommand{\unitm}{\mathbf{1}} 
\newcommand{\sandwich}{(\,)} 
\DeclareMathOperator{\mymod}{\mathrm{mod}}
\begin{document}

\title[Two-sided Clifford FT with two square roots of $-1$ in $\cl{p,q}$]{Two-sided Clifford Fourier transform with two square roots of $-1$ in $\cl{p,q}$}
\author{Eckhard Hitzer}
\address{%
College of Liberal Arts, Department of Material Science,\\ 
International Christian University,\\
Tokyo 181-8585,\\ 
Japan}
\email{hitzer@icu.ac.jp}
%
%
\subjclass{Primary 42B10; \\Secondary 15A66}
\keywords{Clifford Fourier transform, Clifford algebra, signal processing, square roots of $-1$}
\date{December 1, 2012}
\dedicatory{In memory of our dear friend Rev. Olaug Hansen.}

\begin{abstract}
We generalize quaternion and Clifford Fourier transforms to general two-sided Clifford Fourier transforms (CFT), and study their properties (from linearity to convolution). Two general \textit{multivector square roots} $\in \cl{p,q}$ \textit{of} $-1$ are used to split multivector signals, and to construct the left and right CFT kernel factors. 
\end{abstract}

\maketitle

\section{Introduction}

Quaternion, Clifford and geometric algebra Fourier transforms (QFT, CFT, GAFT) \cite{BSH:genGFT,HM:CFToMVF,EH:QFTgen,EH:OPS-QFT,HNK:AppCliffGA,BDS:FTinCA,HB:CliffAFTQM,HS:Orth2dPSplit} have proven \textit{very useful} tools for applications in non-marginal color image processing, image diffusion,  electromagnetism, multi-channel processing, vector field processing, shape representation, linear scale invariant filtering, fast vector pattern matching, phase correlation, quantum mechanics, analysis of non-stationary improper complex signals, flow analysis, partial differential systems, disparity estimation, texture segmentation, as spectral representations for Clifford wavelet analysis, etc. 

All these Fourier transforms essentially analyze scalar, vector and multivector signals in terms of sine and cosine waves with multivector coefficients. For this purpose the imaginary unit $i\in \C$ in $e^{i \phi} = \cos \phi + i \sin \phi$ can be replaced by any \textit{square root of $-1$ in a Clifford algebra $\cl{p,q}$}. The replacement by pure quaternions and blades with negative square \cite{EH:QFTgen,BSH:genGFT} has already yielded a wide variety of results with a clear geometric interpretation. It is well-known that there are elements other than blades, squaring to $-1$. Motivated by their special relevance for new types of CFTs, they have recently been studied thoroughly \cite{SJS:Biqroots,HA:GeoRoots-1,HHA:ICCA9}.  

We therefore tap into these new results on square roots of $-1$ in Clifford algebras and fully general construct CFTs, with two general square roots of $-1$ in $\cl{p,q}$. Our new CFTs form therefore a more general class of CFTs, subsuming and generalizing previous results. A further benefit is, that these new CFTs become \textit{fully steerable} within the continuous Clifford algebra submanifolds of square roots of $-1$. We thus obtain a comprehensive \textit{new mathematical framework} for the investigation and application of Clifford Fourier transforms together with \textit{new properties} (full steerability). Regarding the question of the \textit{most suitable} CFT for a certain application\footnote{Note in this context the spinor representation of images by Batard and Berthier \cite{BB:TIMBirk}, preceeded by \cite{BB:AGACSE2008}. The authors apply a CFT in $\cl{3,0}$ to the spinor representation, which uses in the exponential kernel an adapted choice of bivector, that belongs to the orthonormal frame of the tangent bundle of an oriented two-dimensional Riemannian manifold (the two-dimensional grey scale image, with the intensity value in the third dimension), isometrically immersed in $\R^3$.}, we are only just beginning to leave the terra cognita of familiar transforms to map out the vast array of possible CFTs in $\cl{p,q}$.  

This paper is organized as follows. We first review in Section \ref{sc:CliffAlg} key notions of Clifford algebra, \textit{multivector signal functions}, and the recent results on \textit{square roots of $-1$} in Clifford algebras. In Section \ref{sc:pmsplit} we show how the $\pm$ split or orthogonal 2D planes split of quaternions can be generalized to \textit{split multivector signal functions} with respect to a general pair of square roots of $-1$ in Clifford algebra. Next, in Section \ref{sc:Gen2sCFT} we define the central notion of \textit{general two-sided Clifford Fourier transforms} with respect to any two square roots of $-1$ in Clifford algebra. Finally, in Section \ref{sc:CFTprop} we investigate the \textit{properties} of these new CFTs: linearity, shift, modulation, dilation, moments, inversion, derivatives, Plancherel and Parseval formulas, as well as a convolution theorem.

\section{Clifford's geometric algebra}
\label{sc:CliffAlg}

\begin{defn}[Clifford's geometric algebra \cite{FM:ICNAAM2007,PL:CAandSpin}] \label{df:CliffAlg}
Let  $\{  e_1, e_2, \ldots , e_p, e_{p+1}, \ldots$, $e_n  \}$, with $n=p+q$, $e_k^2=\varepsilon_k$, $\varepsilon_k = +1$ for $k=1, \ldots , p$, $\varepsilon_k = -1$ for $k=p+1, \ldots , n$, be an \textit{orthonormal base} of the inner product vector space $\R^{p,q}$ with a geometric product according to the multiplication rules 
\be
  e_k e_l + e_l e_k = 2 \varepsilon_k \delta_{k,l}, 
  \qquad k,l = 1, \ldots n,
\label{eq:mrules}
\end{equation}
where $\delta_{k,l}$ is the Kronecker symbol with $\delta_{k,l}= 1$ for $k=l$, and $\delta_{k,l}= 0$ for $k\neq l$. This non-commutative product and the additional axiom of \textit{associativity} generate the $2^n$-dimensional Clifford geometric algebra $Cl(p,q) = Cl(\R^{p,q}) = Cl_{p,q} = \mathcal{G}_{p,q} = \R_{p,q}$ over $\R$. The set $\{ e_A: A\subseteq \{1, \ldots ,n\}\}$ with $e_A = e_{h_1}e_{h_2}\ldots e_{h_k}$, $1 \leq h_1< \ldots < h_k \leq n$, $e_{\emptyset}=1$, forms a graded (blade) basis of $Cl(p,q)$. The grades $k$ range from $0$ for scalars, $1$ for vectors, $2$ for bivectors, $s$ for $s$-vectors, up to $n$ for pseudoscalars. 
The vector space $\R^{p,q}$ is included in $Cl(p,q)$ as the subset of 1-vectors. The general elements of $Cl(p,q)$ are real linear combinations of basis blades $e_A$, called Clifford numbers, multivectors or hypercomplex numbers.
\end{defn}

In general $\langle A \rangle_{k}$ denotes the grade $k$ part of $A\in Cl(p,q)$. The parts of grade $0$ and $k+s$, respectively, of the geometric product of a $k$-vector $A_k\in Cl(p,q)$ with an $s$-vector $B_s\in Cl(p,q)$ 
\begin{gather}
  A_k \ast B_s := \langle A_k B_s \rangle_{0}, 
  \qquad
  A_k \wedge B_s := \langle A_k B_s \rangle_{k+s},
  \label{eq:gaprods}
\end{gather}
are called \textit{scalar product} and \textit{outer product}, respectively.

For Euclidean vector spaces $(n=p)$ we use $\R^{n}=\R^{n,0}$ and $Cl(n) = Cl(n,0)$. Every $k$-vector $B$ that can be written as the outer product $B = \vect{b}_1 \wedge \vect{b}_2 \wedge \ldots \wedge \vect{b}_k$ of $k$ vectors $\vect{b}_1, \vect{b}_2, \ldots, \vect{b}_k \in \R^{p,q}$ is called a \textit{simple} $k$-vector or \textit{blade}. 

Multivectors
  $M \in \cl{p,q}$ have $k$-vector parts ($0\leq k \leq n$):
  {scalar} part
  $Sc(M) = \langle M \rangle = \langle M \rangle_0 = M_0 \in \R$, 
  {vector} part
  $\langle M \rangle_1 \in \R^{p,q}$, 
  {bi-vector} part
  $\langle M \rangle_2$,  \ldots, 
  and
  {pseudoscalar} part $\langle M \rangle_n\in\bigwedge^n\R^{p,q}$
\begin{equation}\label{eq:MVgrades}
    M  =  \sum_{A} M_{A} \vect{e}_{A}
       =  \langle M \rangle + \langle M \rangle_1 + \langle M \rangle_2 + \ldots +\langle M \rangle_n \, .
\end{equation}

The \textit{principal reverse} of $M \in \cl{p,q}$ defined as
\begin{equation}\label{eq:MVrev}
  \widetilde{M}=\; \sum_{k=0}^{n}(-1)^{\frac{k(k-1)}{2}}\langle \overline{M} \rangle_k,
\end{equation}
often replaces {complex conjugation and quaternion conjugation}. Taking the \textit{reverse} is equivalent to reversing the order of products of basis vectors in the basis blades $e_A$. The operation $\overline{M}$ means to change in the basis decomposition of $M$ the sign of every vector of negative square $\overline{e_A} = \varepsilon_{h_1}e_{h_1}\varepsilon_{h_2}e_{h_2}\ldots \varepsilon_{h_k}e_{h_k}$, $1 \leq h_1< \ldots < h_k \leq n$. Reversion, $\overline{M}$, and principal reversion are all involutions. 

For $M,N \in \cl{p,q}$ we get $M\ast \widetilde{N}=\sum_{A} M_A N_A.$
  Two multivectors $M,N \in \cl{p,q}$ are \textit{orthogonal} if and only if $M\ast \widetilde{N} = 0$.
  The {modulus} $|M|$ of a multivector $M \in \cl{p,q}$ is defined as 
  \be
     |M|^2 = {M\ast\widetilde{M}}= {\sum_{A} M_A^2}.
  \end{equation}

\subsection{Multivector signal functions}

A multivector valued function
  $f: \R^{p,q} \rightarrow \cl{p,q}$, has $2^n$ blade components
  $(f_A: \R^{p,q} \rightarrow \R)$
  \begin{equation}\label{eq:MVfunc}
    f(\mbox{\textbf{\textit{x}}})  =  \sum_{A} f_{A}(\vect{x}) {\vect{e}}_{A}.
  \end{equation}
We define the \textit{inner product} of two
 functions  $f, g : \R^{p,q} \rightarrow \cl{p,q}$ by
\begin{align}
  \label{eq:mc2}
  (f,g) 
  = \int_{\R^{p,q}}f(\vect{x})
    \widetilde{g(\vect{x})}\;d^n\vect{x}
  = \sum_{A,B}\vect{e}_A \widetilde{\vect{e}_B}
    \int_{\R^{p,q}}f_A (\vect{x})
    g_B (\vect{x})\;d^n\vect{x},
\end{align}
with the \textit{symmetric scalar part}
\begin{align}
  \label{eq:symsc}
  \langle f,g\rangle 
  = \int_{\R^{p,q}}f(\vect{x})\ast\widetilde{g(\vect{x})}\;d^n\vect{x}
  = \sum_{A}
    \int_{\R^{p,q}}f_A (\vect{x})g_A (\vect{x})\;d^n\vect{x},
\end{align}
and the $L^2(\mathbb{R}^{p,q};\cl{p,q})$-\textit{norm} 
\begin{align}\label{eq:0mc2}
   \|f\|^2 
   = \left\langle ( f,f ) \right\rangle
   &= \int_{\R^{p,q}} |f(\vect{x})|^2 d^n\vect{x}
   = \sum_{A} \int_{\R^{p,q}} f_A^2(\vect{x})\;d^n\vect{x},
   \\
   L^2(\R^{p,q};\cl{p,q})
   &= \{f: \R^{p,q} \rightarrow \cl{p,q} \mid \|f\| < \infty \}. 
\end{align}

\subsection{Square roots of $-1$ in Clifford algebras}

Every Clifford algebra $\cl{p,q}$, $s_8=(p-q) \text{ mod } 8$, is isomorphic to one of the following (square) matrix algebras\footnote{Compare chapter 16 on \textit{matrix representations and periodicity of 8}, as well as Table 1 on p. 217 of \cite{PL:CAandSpin}.} $\M(2d,\R)$, $\M(d,\H)$, $\M(2d,\R^2)$, $\M(d,\H^2)$ or $\M(2d,\C)$. The first argument of $\M$ is the dimension, the second the associated ring\footnote{Associated ring means, that the matrix elements are from the respective ring $\R$, $\R^2$, $\C$, $\H$ or $\H^2$.} $\R$ for $s_8=0,2$, $\R^2$ for $s_8=1$, $\C$ for $s_8=3,7$, $\H$ for $s_8=4,6$, and $\H^2$ for $s_8=5$. For even $n$: $d=2^{(n-2)/2}$, for odd $n$: $d=2^{(n-3)/2}$.

It has been shown \cite{HA:GeoRoots-1,HHA:ICCA9} that $Sc(f) = 0$ for every square root of $-\unitm$ in every matrix algebra $\A$ isomorphic to $\cl{p,q}$. One can distinguish \textit{ordinary} square roots of $-\unitm$, and \textit{exceptional} ones. All square roots of $-\unitm$ in $\cl{p,q}$ can be computed using the package CLIFFORD for Maple \cite{asvd,AF:CLIFFORD,worksheets,Maple}. 

In all cases the \textit{ordinary} square roots $f$ of $-\unitm$ constitute a \textit{unique conjugacy class} of dimension $\dim(\A)/2$, which has \textit{as many connected components as the group} $\G(\A)$ of invertible elements in $\A$. Furthermore, for ordinary square roots of $-\unitm$ we always have $\Spec(f) = 0$ (zero pseudoscalar part) if the associated ring is $\R^2$, $\HQ^2$, or $\C$. The exceptional square roots of $-\unitm$ \textit{only} exist if $\A \cong \M(2d,\C)$. 

For $\A=\M(2d,\R)$, the centralizer (set of all elements in $\cl{p,q}$ commuting with $f$) and the conjugacy class of a square root $f$ of $-\unitm$ both have $\R$-dimension $2d^2$ with \textit{two connected components}. For the simplest case $d = 1$ we have the algebra $\cl{2,0}$ isomorphic to
$\M(2,\R)$, see the left side of Fig. \ref{fg:Cln=2}.

For $\A=\M(2d,\R^2)=\M(2d,\R)\times\M(2d,\R)$, the square roots of $(-\unitm,-\unitm)$ are pairs of two square roots of $-\unitm$ in $\M(2d,\R)$. They constitute a unique conjugacy class with \textit{four connected components}, each of dimension $4d^2$. Regarding the four connected components, the group of inner automorphisms $\Inn(\A)$ induces the permutations of the Klein group, whereas the quotient group $\Aut(\A)/\Inn(\A)$ is isomorphic to the group of isometries of a Euclidean square in 2D. The simplest example with $d=1$ is $\cl{2,1}$ isomorphic to $M(2,\R^2)=\M(2,\R)\times\M(2,\R)$.

For $\A=\M(d,\HQ)$, the submanifold of the square roots $f$ of $-\unitm$ is a \textit{single connected conjugacy class} of $\R$-dimension $2d^2$ equal to the $\R$-dimension of the centralizer of every $f$. The easiest example for $d=1$ is $\HQ$, isomorphic to $\cl{0,2}$, see the right side of Fig. \ref{fg:Cln=2}.

\begin{figure}
  \begin{center}      
    \includegraphics[scale=0.35]{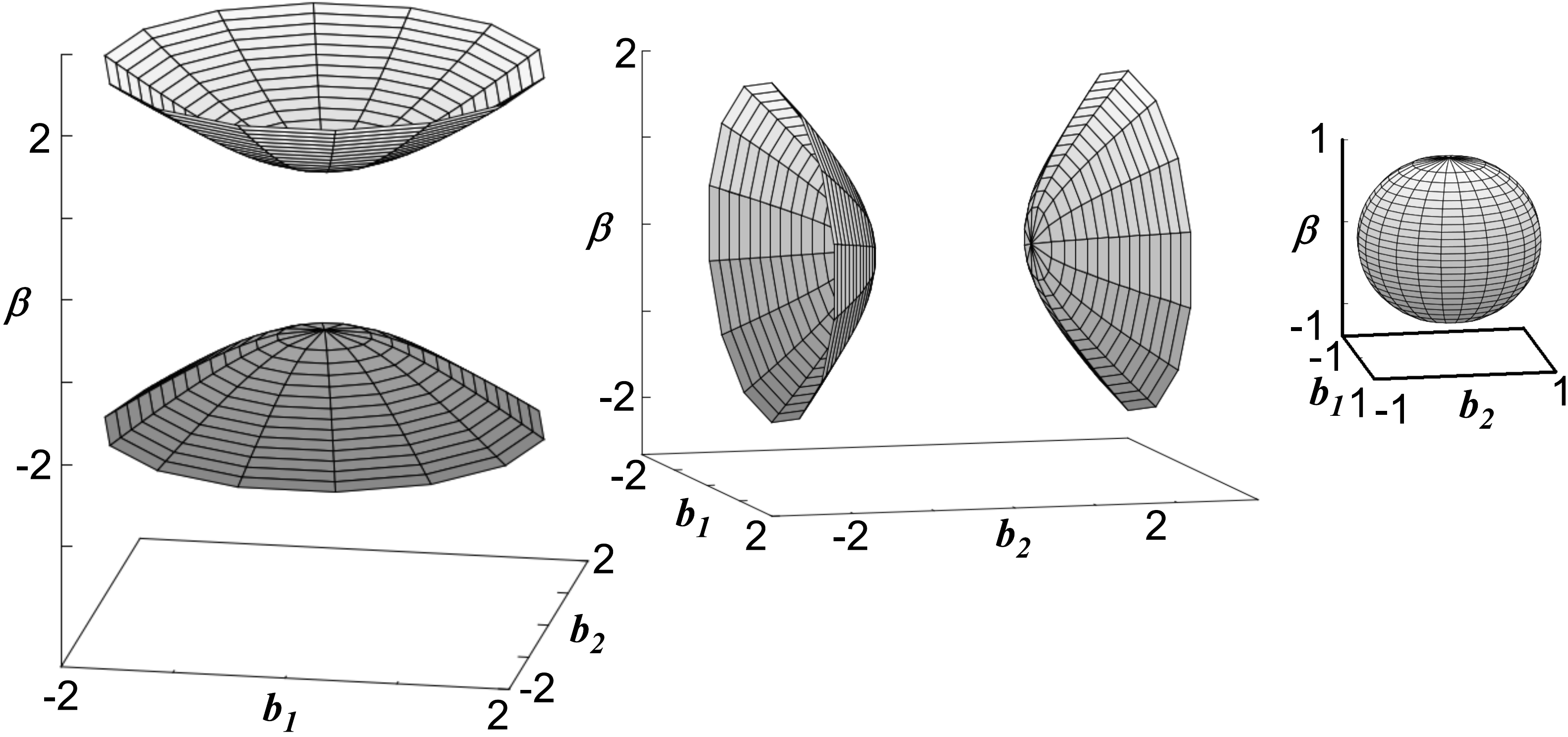}
    \caption{Manifolds of square roots $f$ of $-1$ in $\cl{2,0}$ (left), $\cl{1,1}$ (center), and $\cl{0,2}\cong\H$ (right). The square roots are $f=\alpha + b_1 e_1+b_2e_2+\beta e_{12},$ with $\alpha, b_1, b_2, \beta \in \R$, $\alpha=0$, and $\beta^2=b_1^2e_2^2+b_2^2e_1^2+e_1^2e_2^2$.}  
    \label{fg:Cln=2}
  \end{center}
\end{figure}

For $\A=\M(d,\HQ^2)=\M(d,\HQ)\times\M(d,\HQ)$, the square roots of $(-\unitm,-\unitm)$ are pairs of two square roots $(f,f')$ of $-\unitm$ in $\M(d,\HQ)$ and constitute a \textit{unique connected conjugacy class} of $\R$-dimension $4d^2$. The group $\Aut(\A)$ has two connected components: the neutral component $\Inn(\A)$ connected to the identity and the second component containing the swap automorphism $(f,f')\mapsto (f',f)$. The simplest case for $d=1$ is $\HQ^2$ isomorphic to $\cl{0,3}$.

For $\A=\M(2d,\C)$, the square roots of $-\unitm$ are in \textit{bijection to the idempotents} \cite{AFPR:idem}. First, the \textit{ordinary} square roots of $-\unitm$ (with $k=0$) constitute a conjugacy class of $\R$-dimension $4d^2$ of a \textit{single connected component} which is invariant under $\Aut(\A)$. Second, there are $2d$ \textit{conjugacy classes} of \textit{exceptional} square roots of $-\unitm$, each composed of a \textit{single connected component}, characterized by the equality $\Spec(f) = k/d$ (the pseudoscalar coefficient) with $\pm k \in \{1, 2, \ldots, d\}$, and their $\R$-dimensions are $4(d^2-k^2)$. The group $\Aut(\A)$ includes conjugation of the pseudoscalar $\omega \mapsto -\omega$ which maps the conjugacy class associated with $k$ to the class associated with $-k$. The simplest case for $d=1$ is the Pauli matrix algebra isomorphic to the geometric algebra $\cl{3,0}$ of 3D Euclidean space $\R^3$, and to complex biquaternions \cite{SJS:Biqroots}.

\section{The $\pm$ split with respect to two square roots of $-1$}
\label{sc:pmsplit}

With respect to any square root $f\in \cl{p,q}$ of $-1$, $f^2=-1$, every multivector $A\in \cl{p,q}$ can be split into \textit{commuting} and \textit{anticommuting} parts~\cite{HHA:ICCA9}.
\begin{lem}
\label{lm:fsplit}
Every multivector $A \in \cl{p,q}$ has, with respect to a square root $f\in \cl{p,q}$ of~$-1$, i.e., $f^{-1}=-f,$ the unique decomposition
\begin{gather} 
  A_{+f} = \frac{1}{2}(A + f^{-1}Af),\qquad  
  A_{-f} = \frac{1}{2}(A - f^{-1}Af)
  \nonumber \\
  A = A_{+f}+A_{-f}, 
  \qquad A_{+f}\,f = f A_{+f}, 
  \qquad A_{-f}\,f = -fA_{-f}. 
\end{gather}
\end{lem}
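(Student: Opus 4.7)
The plan is to recognize that the map $\varphi_f : A \mapsto f^{-1}Af$ is a well-defined involution on $\cl{p,q}$ (i.e.\ $\varphi_f^2 = \mathrm{id}$), and that $A_{\pm f}$ as defined are simply the projections onto its $\pm 1$ eigenspaces. Once this is in hand, existence, the commutation relations, and uniqueness all follow from eigenspace decomposition of an involution on a real vector space.

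First I would check existence. Adding the two displayed formulas immediately gives $A_{+f} + A_{-f} = A$, so the decomposition exists for every $A$. The only arithmetic fact needed is that $f$ is invertible with $f^{-1} = -f$, which is given.

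Next I would verify the commutation relations by direct computation, using only $f^2 = -1$:
\begin{align*}
A_{+f}\, f
&= \tfrac{1}{2}(Af + f^{-1}Af^2)
 = \tfrac{1}{2}(Af + fA),\\
f\, A_{+f}
&= \tfrac{1}{2}(fA + f f^{-1} A f)
 = \tfrac{1}{2}(fA + Af),
\end{align*}
so $A_{+f}f = fA_{+f}$. The same calculation, with the minus sign tracked through, gives $A_{-f}f = \tfrac{1}{2}(Af - fA) = -f A_{-f}$. This step is routine; the only subtlety is remembering that $f^{-1}Af^2 = -f^{-1}A = fA$.

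Finally I would establish uniqueness, which is the only conceptually nontrivial part but still short. Suppose $A = B_{+} + B_{-}$ with $B_{+}f = fB_{+}$ and $B_{-}f = -fB_{-}$. Applying $\varphi_f$ gives
\[
f^{-1}Af = f^{-1}B_{+}f + f^{-1}B_{-}f = B_{+} - B_{-}.
\]
Solving the linear system for $B_{\pm}$ yields $B_{\pm} = \tfrac{1}{2}(A \pm f^{-1}Af) = A_{\pm f}$, as required.

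I do not anticipate a serious obstacle: the main point is simply to recognize that conjugation by $f$ squares to the identity (since $(f^{-1}Af)\mapsto f^{-1}(f^{-1}Af)f = f^{-2}Af^2 = A$), so the $\pm f$ split is nothing but the standard spectral decomposition with respect to this involution. The most delicate bookkeeping is handling the factor $f^{-1} = -f$ consistently, but once this is noted the entire proof is a few lines of manipulation.
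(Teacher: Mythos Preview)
Your proof is correct. The paper itself does not supply a proof of this lemma; it simply states the result and cites \cite{HHA:ICCA9}. Your argument---recognizing conjugation by $f$ as an involution and reading off the $\pm 1$ eigenspace projections---is exactly the standard way to establish such a split, and your verification of the commutation relations and uniqueness is clean and complete.
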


For $f,g \in \cl{p,q}$ an arbitrary pair of square roots of $-1$, $f^2=g^2=-1$, the map $f\sandwich g$ is an involution, because $f^2 x g^2 = (-1)^2x = x, \forall \, x\in \cl{p,q}$. In \cite{EH:QFTgen} a split of quaternions by means of the pure unit quaternion basis elements $\vect{i}, \vect{j} \in \HQ$ was introduced, and generalized to a general pair of pure unit quaternions in \cite{EH:OPS-QFT}. We now \textit{generalize the split} to $\cl{p,q}$.

\begin{defn}[$\pm$ split with respect to two square roots of $-1$]
\label{df:genOPS}
Let $f,g$ $\in$ $\cl{p,q}$ be an arbitrary pair of square roots of $-1$, $f^2=g^2=-1$, including the cases $f = \pm g$. The general $\pm$ split is then defined with respect to the two square roots $f, g$ of $-1$ as 
\begin{equation}
  \label{eq:genOPS}
  x_{\pm} = \frac{1}{2}(x \pm f x g), \qquad \forall\, x\in \cl{p,q}.
\end{equation} 
\end{defn}

Note that the split of Lemma \ref{lm:fsplit} is a special case of Definition \ref{df:genOPS} with $g=-f$. 

We observe from (\ref{eq:genOPS}), that $f x g = x_+ - x_-$, i.e. under the map $f\sandwich g$ the $x_+$ part is invariant, but the $x_-$ part changes sign
\begin{equation}
  fx_{\pm}g 
  = \frac{1}{2}(fxg \pm f^2 x g^2)
  = \frac{1}{2}(fxg \pm x )
  = \pm \frac{1}{2}(x \pm f x g) 
  = \pm x_{\pm}.
  \label{eq:fgrotqm}
\end{equation} 

The two parts $x_{\pm}$ can be represented with Lemma \ref{lm:fsplit} as linear combinations of $x_{+f}$ and $x_{-f}$, or of $x_{+g}$ and $x_{-g}$
\begin{align} 
  x_{\pm} 
  &= \frac{1}{2}(x_{+f}+x_{-f} \pm f (x_{+f}+x_{-f}) g)
  = x_{+f}\,\frac{1\pm fg}{2} + x_{-f}\,\frac{1\mp fg}{2}
  \nonumber \\
  &= \frac{1}{2}(x_{+g}+x_{-g} \pm f (x_{+g}+x_{-g}) g)
  = \frac{1\pm fg}{2}\,x_{+g} + \frac{1\mp fg}{2}\,x_{-g} .
\end{align} 

For $\cl{p,q}\cong \M(2d,\C)$ or $\M(d,\H)$ or $\M(d,\H^2)$, or for both $f,g$ being blades in 
$Cl(p,q)\cong \M(2d,\R)$ or $\M(2d,\R^2)$, we have $\widetilde{f}=-f$, $\widetilde{g}=-g$. We therefore obtain the following lemma. 

\begin{lem}[Orthogonality of two $\pm$ split parts]\label{lm:OPSortho}
  Assume in $\cl{p,q}$ two square roots $f,g$ of $-1$ with $\widetilde{f}=-f$, $\widetilde{g}=-g$.
  Given any two multivectors $x,y \in \cl{p,q}$ and applying the $\pm$ split (\ref{eq:genOPS}) with respect to $f,g$ we get zero for the scalar part of the mixed products
  \begin{equation}
     Sc(x_+\widetilde{y_-}) = 0, 
     \qquad Sc(x_-\widetilde{y_+}) = 0 .
  \end{equation}
\end{lem}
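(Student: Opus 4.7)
The plan is to expand both $x_{\pm}$ and $\widetilde{y_{\mp}}$ using Definition~\ref{df:genOPS}, multiply the four resulting terms, and then collapse the scalar part using the cyclicity identity $Sc(AB)=Sc(BA)$ together with $f^2=g^2=-1$.

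First I would rewrite the reverse of $y_{-}$. Since the principal reverse is an anti-automorphism and the hypothesis gives $\widetilde{f}=-f$, $\widetilde{g}=-g$, we have
\begin{equation*}
  \widetilde{fyg} \;=\; \widetilde{g}\,\widetilde{y}\,\widetilde{f} \;=\; (-g)\,\widetilde{y}\,(-f) \;=\; g\,\widetilde{y}\,f,
\end{equation*}
so $\widetilde{y_{-}} = \tfrac{1}{2}(\widetilde{y} - g\widetilde{y}f)$, and analogously $\widetilde{y_{+}} = \tfrac{1}{2}(\widetilde{y} + g\widetilde{y}f)$. Together with $x_{+}=\tfrac{1}{2}(x+fxg)$, the product expands into four terms:
\begin{equation*}
  4\,x_{+}\widetilde{y_{-}} \;=\; x\widetilde{y} \,-\, xg\widetilde{y}f \,+\, fxg\widetilde{y} \,-\, fxg\cdot g\widetilde{y}f.
\end{equation*}

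Next I would simplify the last term via $g^2=-1$, giving $-fxg\cdot g\widetilde{y}f = +fx\widetilde{y}f$. Then, applying $Sc(AB)=Sc(BA)$ to the second and fourth summands, I get $Sc(-xg\widetilde{y}f)=-Sc(fxg\widetilde{y})$ and $Sc(fx\widetilde{y}f)=Sc(f^2 x\widetilde{y})=-Sc(x\widetilde{y})$. The four scalar parts therefore add up to
\begin{equation*}
  Sc(x\widetilde{y}) - Sc(fxg\widetilde{y}) + Sc(fxg\widetilde{y}) - Sc(x\widetilde{y}) = 0,
\end{equation*}
yielding $Sc(x_{+}\widetilde{y_{-}})=0$. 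The identity $Sc(x_{-}\widetilde{y_{+}})=0$ follows by the same computation with all four signs on the cross terms reversed, or simply by swapping the roles of $x$ and $y$ and using that the scalar part is invariant under reversion.

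The only delicate step is the reduction of $\widetilde{fyg}$ to $g\widetilde{y}f$; this is exactly where the hypothesis $\widetilde{f}=-f$, $\widetilde{g}=-g$ is used, and the classification of which $f,g$ satisfy this (blades in the real matrix cases, all square roots of $-1$ in $\M(2d,\C)$, $\M(d,\H)$, $\M(d,\H^{2})$) has already been flagged in the sentence preceding the lemma. Everything else is cyclic rearrangement of scalar parts and the defining relations $f^{2}=g^{2}=-1$, so no further obstacle is expected.
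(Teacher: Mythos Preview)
Your proof is correct and follows essentially the same route as the paper: expand $x_{+}\widetilde{y_{-}}$ into four terms using $\widetilde{y_{-}}=\tfrac{1}{2}(\widetilde{y}-g\widetilde{y}f)$, then cancel pairwise via $g^{2}=-1$ and the cyclicity $Sc(AB)=Sc(BA)$. The paper likewise deduces the second identity from the first via $Sc(\widetilde{z})=Sc(z)$, just as you suggest.
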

We prove the first identity, the second follows from $Sc(x) = Sc(\widetilde{x})$. 
\begin{align}
  Sc(x_+\widetilde{y_-})
  &= \frac{1}{4}Sc((x+fxg)(\widetilde{y}-g\widetilde{y}f))
  = \frac{1}{4}Sc(x\widetilde{y}-fxgg\widetilde{y}f+fxg\widetilde{y}-xg\widetilde{y}f)
  \nonumber \\
  &= 
    \frac{1}{4}Sc(x\widetilde{y}-x\widetilde{y}+xg\widetilde{y}f - xg\widetilde{y}f) = 0,
  \label{eq:prooffgortho}
\end{align}
where the symmetry $Sc(xy)=Sc(yx)$ was used for the third equality.

We will now establish the \textit{general identity}
\be 
  e^{\alpha f} x_{\pm} e^{\beta g} 
  = x_{\pm} e^{(\beta\mp\alpha) g}
  = e^{(\alpha \mp \beta)f } x_{\pm} .
  \label{eq:expqexp}
\end{equation} 
First, we prove
\begin{align}
  &e^{\alpha f} x_{+} 
  = (\cos \alpha + f \sin \alpha ) \frac{1}{2}(x + fxg)
  = \frac{1}{2}(x + fxg) \cos \alpha
    + \frac{1}{2} f (x + fxg) \sin \alpha
  \nonumber \\
  &= \frac{1}{2}(x + fxg) \cos \alpha
    + \frac{1}{2} (fx g(-g) - xg) \sin \alpha
  \label{eq:fx+}  
  \\
  &= \frac{x + fxg}{2} \cos \alpha
    + \frac{fxg + x}{2} (-g) \sin \alpha
  = \frac{x + fxg}{2} (\cos \alpha -g \sin \alpha)
  = x_{+} e^{-\alpha g}.
  \nonumber 
\end{align}
Similarly we can prove that $e^{\alpha f} x_{-} = x_{-} e^{+\alpha g}$ by replacing $g\rightarrow -g$ $(\Rightarrow x_+\rightarrow x_-)$ in \eqref{eq:fx+}.

\section{General two-sided Clifford Fourier transforms}
\label{sc:Gen2sCFT}

The \textit{general two-sided Clifford Fourier transform} (CFT), to be introduced now, can both be understood as a generalization of known one-sided CFTs \cite{HM:CFToMVF}, or of the two-sided quaternion Fourier transformation (QFT) \cite{EH:QFTgen,EH:OPS-QFT} to a general Clifford algebra setting. Most previously known CFTs use in their kernels specific square roots of $-1$, like bivectors, pseudoscalars, unit pure quaternions, or sets of coorthogonal blades (commuting or anticommuting blades) \cite{BSH:genGFT}. We will \textit{remove all these restrictions} on the square roots of $-1$ used in a CFT. Note that if the left or right phase angle is identical to zero, we get one-sided right or left sided CFTs, respectively.

\begin{defn}[CFT with respect to two square roots of $-1$]
\label{df:CFTfg}
Let $f,g$ $\in$ $\cl{p,q}$, $f^2=g^2=-1$, be any two square roots of $-1$. The general two-sided Clifford Fourier transform (CFT) of $h\in L^1(\R^{p,q},\cl{p,q})$, with respect to $f,g$ is 
\begin{equation}
  \label{eq:CFTfg}
  \mathcal{F}^{f,g}\{ h \}(\bomega)
  = \int_{\R^{p,q}} e^{-f u(\svect{x},\bomega)} h(\bvect{x}) 
              \,e^{-g v(\svect{x},\bomega)} d^n\bvect{x},
\end{equation}
where $d^n\bvect{x} = dx_1\ldots dx_n$, $\bvect{x}, \bomega \in \R^{p,q}$, and $u,v: \R^{p,q}\times\R^{p,q}\rightarrow \R $.
\end{defn}

Since square roots of $-1$ in $\cl{p,q}$ populate \textit{continuous submanifolds} in $\cl{p,q}$, the CFT of Definition \ref{df:CFTfg} is generically \textit{steerable} within these manifolds. In Definition \ref{df:CFTfg}, the two square roots $f,g \in \cl{p,q}$ of $-1$ may be from the same (or different) conjugacy class and component, respectively. 

Linearity of the CFT integral \eqref{eq:CFTfg} allows us to use the $\pm$ split $h=h_- + h_+$ of Definition \ref{df:genOPS} to obtain
\begin{align}
  \mathcal{F}^{f,g}\{ h \}(\bomega)
  &= \mathcal{F}^{f,g}\{ h_- \}(\bomega) + \mathcal{F}^{f,g}\{ h_+ \}(\bomega)
  \nonumber \\
  &= \mathcal{F}^{f,g}_-\{ h \}(\bomega) + \mathcal{F}_+^{f,g}\{ h \}(\bomega),
	\label{eq:CFTsplit}
\end{align} 
since by their construction the operators of the Clifford Fourier transformation $\mathcal{F}^{f,g}$, and of the $\pm$ split with respect to $f,g$ commute. 
From (\ref{eq:expqexp}) follows 

\begin{thm}[CFT of $h_{\pm}$]
\label{th:fpmtrafo}
The CFT of the $\pm$ split parts $h_{\pm}$ , with respect to two square roots $f,g \in \cl{p,q}$ of $-1$ , of a Clifford module function 
$h \in$ $L^1 \left(\R^{p,q}; \right.$ $\left. \cl{p,q}\right)$ have the quasi-complex forms
\begin{align}
  \mathcal{F}^{f,g}_{\pm}\{h\} 
  &= \mathcal{F}^{f,g}\{h_{\pm}\} 
  \nonumber \\
  &= \int_{\R^{p,q}}
    h_{\pm}\,e^{-g (v(\svect{x},\bomega) \mp u(\svect{x},\bomega))}d^nx
  \stackrel{}{=} \int_{\R^{p,q}}
    e^{-f (u(\svect{x},\bomega) \mp v(\svect{x},\bomega))}h_{\pm}\,d^nx \,\, .
\end{align}
\end{thm}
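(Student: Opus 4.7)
The plan is to establish the two equalities in turn, both as direct consequences of tools already in the excerpt.

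First, for the equality $\mathcal{F}^{f,g}_{\pm}\{h\} = \mathcal{F}^{f,g}\{h_{\pm}\}$, I would simply invoke the splitting relation \eqref{eq:CFTsplit}, which was derived from linearity of the integral together with the observation that the $\pm$ split operators commute with the CFT integral operator. By definition of $\mathcal{F}^{f,g}_{\pm}$ (the $\pm$-part of the transformed function), this is exactly the statement that split-then-transform equals transform-then-split, which in turn is immediate from the fact that the split only acts on multivector values (not on the spatial variable of integration). So this step is essentially bookkeeping.

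The real content is the second equality, the quasi-complex form. For this I would start from the defining integral
\begin{equation*}
\mathcal{F}^{f,g}\{ h_{\pm} \}(\bomega)
  = \int_{\R^{p,q}} e^{-f u(\svect{x},\bomega)}\, h_{\pm}(\bvect{x})\, e^{-g v(\svect{x},\bomega)}\, d^n\bvect{x},
\end{equation*}
and apply identity \eqref{eq:expqexp} pointwise inside the integrand. Setting $\alpha = -u$ and $\beta = -v$ in \eqref{eq:expqexp} yields $(\beta \mp \alpha)g = -(v \mp u)g$ and $(\alpha \mp \beta)f = -(u \mp v)f$, so
\begin{equation*}
e^{-f u}\, h_{\pm}\, e^{-g v} = h_{\pm}\, e^{-g(v \mp u)} = e^{-f(u \mp v)}\, h_{\pm}.
\end{equation*}
Pulling the two resulting expressions back under the integral sign gives exactly the two stated forms.

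The step requiring care is tracking the $\mp$/$\pm$ conventions through the substitution into \eqref{eq:expqexp}: one must remember that the top sign on $\mp$ pairs with the top sign on $\pm$ (i.e., with the subscript $+$ on $h_{\pm}$), and verify that the signs from $\alpha=-u$, $\beta=-v$ combine with those in \eqref{eq:expqexp} to produce the correct $-(v \mp u)$ in the first form and $-(u \mp v)$ in the second. Everything else is routine; no new estimates or algebraic identities are needed beyond \eqref{eq:expqexp} and \eqref{eq:CFTsplit}, both of which are in hand.
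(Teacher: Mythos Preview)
Your proposal is correct and matches the paper's approach: the paper simply states that Theorem~\ref{th:fpmtrafo} follows from identity~\eqref{eq:expqexp}, and your argument is exactly the pointwise application of \eqref{eq:expqexp} (with $\alpha=-u$, $\beta=-v$) under the integral, together with the commutation observation behind \eqref{eq:CFTsplit}. Your sign check is accurate, so nothing further is needed.
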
 

\begin{rem}
Theorem \ref{th:fpmtrafo} establishes in combination with \eqref{eq:CFTsplit} a general method for how to compute a two-sided CFT in terms of two one-sided CFTs. For special relations of two-sided and one-sided quaternionic Fourier transforms see \cite{asvd,EH:QFTgen,EH:DirQFTUP,EH:OPS-QFT,HS:Orth2dPSplit}. 
\end{rem}

\begin{rem}
The quasi-complex forms in Theorem \ref{th:fpmtrafo} allow to establish \textit{discretized} and \textit{fast} versions of the general two-sided CFT of Definition \ref{df:CFTfg} as sums of complex discretized and fast Fourier transformations (FFT), respectively. 
\end{rem}

\section{Properties of the general two-sided CFT}
\label{sc:CFTprop}

We now study important properties of the general two-sided CFT of Definition \ref{df:CFTfg}.

\subsection{Linearity, shift, modulation, dilation, and powers of $f,g$}

Regarding \textit{left and right linearity} of the general two-sided CFT of Definition \ref{df:CFTfg} we can establish with the help of Lemma \ref{lm:fsplit} the following proposition. 
\begin{prop}[Left and right linearity]  
 For $h_1,h_2\in L^1(\R^{p,q}; \cl{p,q})$, and constants $\alpha,\beta \in \cl{p,q}$ we have
\begin{align} 
  \mathcal{F}^{f,g}\{ \alpha h_1 + \beta h_2 \}(\bomega)
  &= \alpha_{+f}\mathcal{F}^{f,g}\{h_1\}(\bomega) + \alpha_{-f}\mathcal{F}^{-f,g}\{h_1\}(\bomega) 
  \nonumber \\
  \label{eq:leftlin}
  &\phantom{= } + \beta_{+f}\mathcal{F}^{f,g}\{h_2\}(\bomega) + \beta_{-f}\mathcal{F}^{-f,g}\{h_2\}(\bomega),
  \\
  \mathcal{F}^{f,g}\{ h_1 \alpha  + h_2 \beta  \}(\bomega)
  &= \mathcal{F}^{f,g}\{h_1\}(\bomega) \alpha_{+g} + \mathcal{F}^{f,-g}\{h_1\}(\bomega) \alpha_{-g}
  \nonumber \\
  \label{eq:rightlin}
  &\phantom{= } + \mathcal{F}^{f,g}\{h_2\}(\bomega) \beta_{+g} + \mathcal{F}^{f,-g}\{h_2\}(\bomega) \beta_{-g}\, .
\end{align}
\end{prop}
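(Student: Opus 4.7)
The plan is to exploit Lemma \ref{lm:fsplit} to split each constant into parts that commute and anticommute with the relevant kernel exponential, and then pull the constants through the kernel using those (anti)commutation relations. By additivity of the integral, it suffices to prove the single-term versions $\mathcal{F}^{f,g}\{\alpha h\}(\bomega) = \alpha_{+f}\mathcal{F}^{f,g}\{h\}(\bomega) + \alpha_{-f}\mathcal{F}^{-f,g}\{h\}(\bomega)$ for left linearity, and $\mathcal{F}^{f,g}\{h\alpha\}(\bomega) = \mathcal{F}^{f,g}\{h\}(\bomega)\alpha_{+g} + \mathcal{F}^{f,-g}\{h\}(\bomega)\alpha_{-g}$ for right linearity; the full statements then follow by applying each to $\alpha h_1$, $\beta h_2$ (respectively $h_1\alpha$, $h_2\beta$) and summing.

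For the left identity, write $\alpha = \alpha_{+f} + \alpha_{-f}$ using Lemma \ref{lm:fsplit}. Since $\alpha_{+f}f = f\alpha_{+f}$, the part $\alpha_{+f}$ commutes with every power of $f$ and hence with the left kernel factor, giving $e^{-fu(\svect{x},\bomega)}\alpha_{+f} = \alpha_{+f}e^{-fu(\svect{x},\bomega)}$. Since $\alpha_{-f}f = -f\alpha_{-f}$, expanding the exponential series (and using that $u(\vect{x},\bomega) \in \R$ is central) yields $e^{-fu(\svect{x},\bomega)}\alpha_{-f} = \alpha_{-f}e^{+fu(\svect{x},\bomega)}$. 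Inserting the decomposition into the defining integral \eqref{eq:CFTfg} and factoring the constants to the left of the integral produces exactly the claimed two terms, the second of which is the CFT with $f$ replaced by $-f$.

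For the right identity, the argument is symmetric with the roles of the right kernel factor and $g$: split $\alpha = \alpha_{+g} + \alpha_{-g}$ via Lemma \ref{lm:fsplit} now with respect to $g$. Then $\alpha_{+g}e^{-gv(\svect{x},\bomega)} = e^{-gv(\svect{x},\bomega)}\alpha_{+g}$ while $\alpha_{-g}e^{-gv(\svect{x},\bomega)} = e^{+gv(\svect{x},\bomega)}\alpha_{-g}$. Inserting these into $\int e^{-fu} h\alpha\,e^{-gv}\,d^n\vect{x}$ and factoring the constants to the right of the integral yields $\mathcal{F}^{f,g}\{h\}(\bomega)\alpha_{+g} + \mathcal{F}^{f,-g}\{h\}(\bomega)\alpha_{-g}$.

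There is no real obstacle here; the only mild subtlety is verifying that anticommutation with $f$ (or $g$) propagates through the exponential series as a sign flip of the exponent. This is just the observation that $f$ appears linearly in each term of $e^{-fu} = \sum_{k\ge 0}(-fu)^k/k!$, so anticommutation with $f$ sends $(-fu)^k$ to $(fu)^k$, which is precisely the content of $e^{-fu}\alpha_{-f}= \alpha_{-f}e^{+fu}$ and its $g$-analogue. Once this is in hand, both formulas are immediate from linearity of the integral.
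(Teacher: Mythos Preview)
Your proposal is correct and follows essentially the same approach as the paper: split the constant via Lemma~\ref{lm:fsplit}, use (anti)commutation with $f$ (respectively $g$) to pass through the exponential kernel as $e^{-fu}\alpha_{\pm f}=\alpha_{\pm f}e^{\mp fu}$, and insert into the defining integral. The only imprecision is the phrase ``$f$ appears linearly in each term'' of the exponential series---what you need (and in fact use) is that $(-fu)^k\alpha_{-f}=\alpha_{-f}(fu)^k$ because anticommuting $\alpha_{-f}$ past each of the $k$ factors of $f$ contributes a sign $(-1)^k$; with that clarification the argument is complete.
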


\begin{proof}
Based on Lemma \ref{lm:fsplit} we have
\begin{align} 
  &\alpha = \alpha_{+f} + \alpha_{-f}, \qquad
  \alpha_{+f}f = f\alpha_{+f}, \qquad
  \alpha_{-f}f = -f\alpha_{-f}
  \nonumber \\
  &\Rightarrow \,\,\,
  e^{-fu} \alpha 
  = e^{-fu} (\alpha_{+f} + \alpha_{-f})
  = e^{-fu}\alpha_{+f} + e^{-fu}\alpha_{-f}
  \nonumber \\
  &\phantom{ \Rightarrow \,\,\, e^{-fu} \alpha }\,\,
  = \alpha_{+f}e^{-fu} + \alpha_{-f}e^{-(-f)u},
  \label{eq:falp}
\end{align}
and similarly 
\be
  \beta = \beta_{+f} + \beta_{-f}, \qquad
  e^{-fu} \beta
  = \beta_{+f}e^{-fu} + \beta_{-f}e^{-(-f)u},
  \label{eq:fbet}
\end{equation}
as well as
\be
  \label{eq:galpbet}
  \alpha e^{-gv} 
  = e^{-gv} \alpha_{+g} + e^{-(-g)v}\alpha_{-g}, \qquad
  \beta e^{-gv} 
  = e^{-gv} \beta_{+g} + e^{-(-g)v}\beta_{-g},
\end{equation} 
We insert \eqref{eq:falp} and \eqref{eq:fbet} into Definition \ref{df:CFTfg} and get
\begin{align} 
  &\mathcal{F}^{f,g}\{ \alpha h_1 + \beta h_2 \}(\bomega)
  = \int_{\R^{p,q}} e^{-f u} \{\alpha h_1 + \beta h_2 \}
              \,e^{-g v} d^n\bvect{x}
  \nonumber \\
  &= \int_{\R^{p,q}} 
    \{\alpha_{+f}e^{-fu}h_1 + \alpha_{-f}e^{-(-f)u} h_1 
    + \beta_{+f}e^{-fu}h_1 + \beta_{-f}e^{-(-f)u} h_2 \}
    \,e^{-g v} d^n\bvect{x}
  \nonumber \\
  &= \alpha_{+f}\mathcal{F}^{f,g}\{h_1\}(\bomega) 
     + \alpha_{-f}\mathcal{F}^{-f,g}\{h_1\}(\bomega)
  \nonumber \\
  &\phantom{= }  \,\,
     + \beta_{+f}\mathcal{F}^{f,g}\{h_2\}(\bomega) 
     + \beta_{-f}\mathcal{F}^{-f,g}\{h_2\}(\bomega).
\end{align}
By instead applying \eqref{eq:galpbet} we can similarly derive \eqref{eq:rightlin}.
\end{proof}

Regarding the CFT of $\bvect{x}$-\textit{shifted} functions we obtain the following proposition.
\begin{prop}[$\bvect{x}$-{shift}]
For an $\bvect{x}$-\textit{shifted} function $h_0(\bvect{x})= h(\bvect{x}-\bvect{x}_0)$, $h \in L^1(\R^{p,q}; \cl{p,q})$, with constant $\bvect{x}_0\in \R^{p,q}$, assuming linearity of $u(\vect{x},\bomega), v(\vect{x},\bomega)$ in their vector space argument $\vect{x}$, we get
\be 
  \mathcal{F}^{f,g}\{ h_0 \}(\bomega)
  = e^{-f u(\svect{x}_0,\bomega)} \mathcal{F}^{f,g}\{ h \}(\bomega) 
  \,e^{-g v(\svect{x}_0,\bomega)}.
\end{equation} 
\end{prop}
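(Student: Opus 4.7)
The plan is to reduce the shifted transform to the unshifted one by a change of variables, then to factor the exponentials using the linearity hypothesis, and finally to pull the constant exponential factors outside the integral on the appropriate sides.

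First I would write out the definition explicitly:
\begin{equation*}
  \mathcal{F}^{f,g}\{ h_0 \}(\bomega)
  = \int_{\R^{p,q}} e^{-f u(\svect{x},\bomega)}\, h(\vect{x}-\vect{x}_0)\, e^{-g v(\svect{x},\bomega)}\, d^n\vect{x}
\end{equation*}
and perform the translation $\vect{y} = \vect{x} - \vect{x}_0$, so that $d^n\vect{x}=d^n\vect{y}$ and $\vect{x} = \vect{y}+\vect{x}_0$. Next I would invoke the hypothesis that $u(\cdot,\bomega)$ and $v(\cdot,\bomega)$ are linear in their first argument to split
\begin{equation*}
  u(\vect{y}+\vect{x}_0,\bomega) = u(\vect{y},\bomega)+u(\vect{x}_0,\bomega),
  \qquad
  v(\vect{y}+\vect{x}_0,\bomega) = v(\vect{y},\bomega)+v(\vect{x}_0,\bomega).
\end{equation*}

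The second step is to factor each exponential. Since $u(\vect{y},\bomega)$ and $u(\vect{x}_0,\bomega)$ are real scalars, the elements $-fu(\vect{y},\bomega)$ and $-fu(\vect{x}_0,\bomega)$ lie in the real subalgebra $\R[f]$, so they commute; hence
\begin{equation*}
  e^{-f u(\svect{y}+\svect{x}_0,\bomega)}
  = e^{-f u(\svect{x}_0,\bomega)}\, e^{-f u(\svect{y},\bomega)},
\end{equation*}
and analogously
\begin{equation*}
  e^{-g v(\svect{y}+\svect{x}_0,\bomega)}
  = e^{-g v(\svect{y},\bomega)}\, e^{-g v(\svect{x}_0,\bomega)},
\end{equation*}
where for this second identity I place the $\vect{x}_0$-dependent factor on the right, preparing it for extraction from the right-hand side of the integrand.

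Finally, because $e^{-f u(\svect{x}_0,\bomega)}$ and $e^{-g v(\svect{x}_0,\bomega)}$ are independent of the integration variable, they can be pulled out of the integral on the left and on the right, respectively. The remaining integral is exactly $\mathcal{F}^{f,g}\{h\}(\bomega)$, yielding the claimed identity. I do not anticipate any real obstacle: the only point that requires some care is the non-commutativity of $\cl{p,q}$, which forces one to keep the left factor on the left and the right factor on the right throughout, and to verify that commutation within $\R[f]$ (respectively $\R[g]$) suffices to split the exponentials in the first place.
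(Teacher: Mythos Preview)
Your proposal is correct and follows essentially the same approach as the paper's own proof: substitute $\vect{y}=\vect{x}-\vect{x}_0$, use linearity of $u,v$ in the first argument to split the exponentials, and pull the $\vect{x}_0$-dependent factors out on the appropriate sides. Your added remark that the splitting of each exponential is justified because the exponents lie in the commutative subalgebra $\R[f]$ (respectively $\R[g]$) is a point the paper leaves implicit.
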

\begin{proof}
We assume linearity of $u(\vect{x},\bomega), v(\vect{x},\bomega)$ in their vector space argument $\vect{x}$. Inserting $h_0(\bvect{x})= h(\bvect{x}-\bvect{x}_0)$ in Definition \ref{df:CFTfg} we obtain
\begin{align}
  \mathcal{F}^{f,g}\{ h_0 \}(\bomega)
  &= \int_{\R^{p,q}} e^{-f u(\svect{x},\bomega)} h(\bvect{x}-\bvect{x}_0)
              \,e^{-g v(\svect{x},\bomega)} d^n\bvect{x}
  \nonumber \\
  &= \int_{\R^{p,q}} e^{-f u(\svect{y}+\svect{x}_0,\bomega)} h(\bvect{y})
              \,e^{-g v(\svect{y}+\svect{x}_0,\bomega)} d^n\bvect{y}
  \nonumber \\
  &= \int_{\R^{p,q}} e^{-f u(\svect{x}_0,\bomega)}
                    e^{-f u(\svect{y},\bomega)} h(\bvect{y})
              \,e^{-g v(\svect{y},\bomega)}
                e^{-g v(\svect{x}_0,\bomega)} d^n\bvect{y}
  \nonumber \\
  &= e^{-f u(\svect{x}_0,\bomega)}
    \int_{\R^{p,q}} e^{-f u(\svect{y},\bomega)} h(\bvect{y})
              \,e^{-g v(\svect{y},\bomega)} d^n\bvect{y}
    \,e^{-g v(\svect{x}_0,\bomega)}
  \nonumber \\
  &= e^{-f u(\svect{x}_0,\bomega)}
    \mathcal{F}^{f,g}\{ h \}(\bomega)
    \,e^{-g v(\svect{x}_0,\bomega)},
\end{align}
where we have substituted $\bvect{y} = \bvect{x}-\bvect{x}_0$ for the second equality, we used the linearity of $u(\vect{x},\bomega)$ and $v(\vect{x},\bomega)$ in their vector space argument $\vect{x}$ for the third equality, and that $e^{-f u(\svect{x}_0,\bomega)}$ and $e^{-g v(\svect{x}_0,\bomega)}$ are independent of $\bvect{y}$ for the fourth equality. 
\end{proof}

The next proposition on the CFT of \textit{modulated} functions assumes special linearity properties of the functions $u,v$.
\begin{prop}[Modulation] 
Assume that the functions $u(\bvect{x},\bomega), v(\bvect{x},\bomega)$ are both linear in their frequency argument $\bomega$. Then we obtain for $h_m(\bvect{x})=e^{-f u(\svect{x},\bomega_0)}h(x)\,e^{-g v(\svect{x},\bomega_0)}$, $h \in L^1(\R^{p,q}; \cl{p,q})$, and constant $\bomega_0\in \R^{p,q}$ the modulation formula
\be 
  \mathcal{F}^{f,g}\{ h_m \}(\bomega)
  = \mathcal{F}^{f,g}\{ h \}(\bomega+\bomega_0). 
\end{equation} 
\end{prop}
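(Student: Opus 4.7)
The plan is to substitute the definition of $h_m$ directly into the CFT integral from Definition \ref{df:CFTfg}, and then exploit the fact that real-scalar multiples of a fixed square root of $-1$ commute among themselves, so that $e^{-fa}\,e^{-fb}=e^{-f(a+b)}$ for real $a,b$ (and analogously for $g$). The hypothesis of linearity of $u$ and $v$ in $\bomega$ then collapses the combined arguments into values at the shifted frequency $\bomega+\bomega_0$.

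Concretely, I would first insert $h_m(\bvect{x})=e^{-f u(\svect{x},\bomega_0)}h(\bvect{x})\,e^{-g v(\svect{x},\bomega_0)}$ into \eqref{eq:CFTfg} to obtain
\begin{align*}
\mathcal{F}^{f,g}\{h_m\}(\bomega)
&= \int_{\R^{p,q}} e^{-f u(\svect{x},\bomega)}\,e^{-f u(\svect{x},\bomega_0)}\,h(\bvect{x})\,e^{-g v(\svect{x},\bomega_0)}\,e^{-g v(\svect{x},\bomega)}\,d^n\bvect{x}.
\end{align*}
Next I would merge the two left-hand exponentials, both built from $f$, into $e^{-f[u(\svect{x},\bomega)+u(\svect{x},\bomega_0)]}$, and likewise the two right-hand exponentials into $e^{-g[v(\svect{x},\bomega_0)+v(\svect{x},\bomega)]}$. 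Applying the assumed linearity of $u$ and $v$ in their $\bomega$-argument turns these sums into $u(\svect{x},\bomega+\bomega_0)$ and $v(\svect{x},\bomega+\bomega_0)$, after which the integrand is exactly the CFT kernel of Definition \ref{df:CFTfg} at the shifted frequency $\bomega+\bomega_0$, giving $\mathcal{F}^{f,g}\{h\}(\bomega+\bomega_0)$.

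The only delicate point worth flagging is the merging of exponentials: this step relies on the fact that the exponential series in a single square root $f$ lies in the commutative real subalgebra $\R\oplus\R f\subset\cl{p,q}$, so that the usual scalar identity $e^{A}e^{B}=e^{A+B}$ for commuting $A,B$ applies without invoking any Baker--Campbell--Hausdorff correction. Everything else is a direct substitution, and no splitting via Lemma \ref{lm:fsplit} or \eqref{eq:expqexp} is needed here since $f$ never has to pass through $h(\bvect{x})$ or across the kernel factor on the opposite side.
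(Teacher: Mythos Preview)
Your proposal is correct and follows essentially the same approach as the paper's own proof: direct substitution of $h_m$ into the CFT integral, merging the adjacent exponentials in $f$ (respectively $g$), and invoking linearity of $u,v$ in $\bomega$ to obtain the kernel at $\bomega+\bomega_0$. Your explicit justification that $e^{-fa}e^{-fb}=e^{-f(a+b)}$ via the commutative subalgebra $\R\oplus\R f$ makes a step explicit that the paper leaves implicit.
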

\begin{proof}
We assume, that the functions $u(\bvect{x},\bomega)$ and $v(\bvect{x},\bomega)$ are both linear in their frequency argument $\bomega$. Inserting $h_m(\bvect{x})=e^{-f u(\svect{x},\bomega_0)}h(x)\,e^{-g v(\svect{x},\bomega_0)}$ in Definition \ref{df:CFTfg} we obtain
\begin{align}
  \mathcal{F}^{f,g}\{ h_m \}(\bomega)
  &= \int_{\R^{p,q}} e^{-f u(\svect{x},\bomega)} h_m(\bvect{x})
              \,e^{-g v(\svect{x},\bomega)} d^n\bvect{x}
  \nonumber \\
  &= \int_{\R^{p,q}} e^{-f u(\svect{x},\bomega)} e^{-f u(\svect{x},\bomega_0)}h(x)\,e^{-g v(\svect{x},\bomega_0)}
              \,e^{-g v(\svect{x},\bomega)} d^n\bvect{x}
  \nonumber \\
  &= \int_{\R^{p,q}} e^{-f u(\svect{x},\bomega+\bomega_0)} h(x)
              \,e^{-g v(\svect{x},\bomega+\bomega_0)} d^n\bvect{x}
  \nonumber \\
  &= \mathcal{F}^{f,g}\{ h \}(\bomega+\bomega_0),
\end{align}
where we used the linearity of $u(\bvect{x},\bomega)$ and $v(\bvect{x},\bomega)$ in their frequency argument $\bomega$ for the third equality. 
\end{proof}

Regarding \textit{dilations}, further special assumption are made for the functions $u,v$.
\begin{prop}[Dilations]
\label{prp:dil}
Assume that for constants $a_1, \ldots, a_n$ $\in \R\setminus\{0\}$, and $\bvect{x}' = \sum_{k=1}^n a_kx_k\bvect{e}_k$, we have $u(\bvect{x}',\bomega) = u(\bvect{x},\bomega')$, and $v(\bvect{x}',\bomega) = v(\bvect{x},\bomega')$, with $\bomega' = \sum_{k=1}^n a_k\omega_k\bvect{e}_k$. We then obtain for $h_d(\bvect{x})=h(\bvect{x}')$, $h \in L^1(\R^{p,q}; \cl{p,q})$, that
\be 
  \mathcal{F}^{f,g}\{ h_d \}(\bomega)
  = \frac{1}{|a_1 \ldots a_n|}\mathcal{F}^{f,g}\{ h \}(\bomega_d), \qquad
  \bomega_d = \sum_{k=1}^n \frac{1}{a_k}\omega_k\bvect{e}_k . 
\end{equation} 
\end{prop}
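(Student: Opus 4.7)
The plan is to proceed by the standard substitution argument, which needs only a change of variables together with a careful application of the hypothesis that relates scaling of the spatial argument to scaling of the frequency argument of $u$ and $v$.

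First I would start from Definition \ref{df:CFTfg} and write out
\[
  \mathcal{F}^{f,g}\{h_d\}(\bomega)
  = \int_{\R^{p,q}} e^{-f u(\bvect{x},\bomega)}\, h(\bvect{x}')\, e^{-g v(\bvect{x},\bomega)}\, d^n\bvect{x},
\]
then substitute $\bvect{y}=\bvect{x}'$, i.e.\ $y_k=a_kx_k$, so that $\bvect{x}=\sum_{k=1}^n \frac{1}{a_k}y_k\bvect{e}_k$, and the Jacobian yields $d^n\bvect{x} = \frac{1}{|a_1\cdots a_n|}\,d^n\bvect{y}$.

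Second, I would use the hypothesis in its "inverse" form: apply it with dilation factors $1/a_1,\ldots,1/a_n$ in place of $a_1,\ldots,a_n$. Under this choice, the vector $\sum_k \frac{1}{a_k} y_k \bvect{e}_k$ plays the role of the rescaled spatial argument, which is exactly $\bvect{x}$, and the rescaled frequency argument becomes $\sum_k \frac{1}{a_k}\omega_k\bvect{e}_k = \bomega_d$. Hence
\[
  u(\bvect{x},\bomega) = u\Bigl(\sum_k \tfrac{1}{a_k}y_k\bvect{e}_k,\bomega\Bigr) = u(\bvect{y},\bomega_d),
\]
and identically $v(\bvect{x},\bomega) = v(\bvect{y},\bomega_d)$.

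Inserting these into the integral and pulling the constant Jacobian factor outside gives
\[
  \mathcal{F}^{f,g}\{h_d\}(\bomega)
  = \frac{1}{|a_1\cdots a_n|}\int_{\R^{p,q}} e^{-f u(\bvect{y},\bomega_d)}\, h(\bvect{y})\, e^{-g v(\bvect{y},\bomega_d)}\, d^n\bvect{y}
  = \frac{1}{|a_1\cdots a_n|}\,\mathcal{F}^{f,g}\{h\}(\bomega_d),
\]
which is the claim. The only mildly subtle point, and the one I would highlight as the main obstacle, is recognizing that the hypothesis on $u,v$ should be invoked with the reciprocal scaling factors $1/a_k$ so as to convert $u(\bvect{x},\bomega)$ (which is what appears in the integrand after substitution) into $u(\bvect{y},\bomega_d)$; everything else is the usual substitution argument, and noncommutativity causes no trouble since the exponential factors flank $h$ from the correct sides throughout.
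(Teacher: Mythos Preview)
Your proof is correct and follows essentially the same approach as the paper: the same substitution $\bvect{y}=\bvect{x}'$, the same Jacobian factor $1/|a_1\cdots a_n|$, and the same application of the hypothesis with the reciprocal scalings $1/a_k$ to convert $u(\bvect{x},\bomega)$ into $u(\bvect{y},\bomega_d)$. The paper additionally spells out why negative $a_k$ contribute $1/|a_k|$ (via swapping integration limits), but otherwise the two arguments coincide.
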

\begin{proof}
We assume for constants $a_1, \ldots, a_n$ $\in \R\setminus\{0\}$, and $\bvect{x}' = \sum_{k=1}^n a_kx_k\bvect{e}_k$, that we have $u(\bvect{x}',\bomega) = u(\bvect{x},\bomega')$, and $v(\bvect{x}',\bomega) = v(\bvect{x},\bomega')$, with $\bomega' = \sum_{k=1}^n a_k\omega_k\bvect{e}_k$. Inserting $h_d(\bvect{x})=h(\bvect{x}')$ in Definition \ref{df:CFTfg} we obtain
\begin{align}
  \mathcal{F}^{f,g}\{ h_d \}(\bomega)
  &= \int_{\R^{p,q}} e^{-f u(\svect{x},\bomega)} h_d(\bvect{x})
              \,e^{-g v(\svect{x},\bomega)} d^n\bvect{x}
  \nonumber \\
  &= \int_{\R^{p,q}} e^{-f u(\svect{x},\bomega)} h(\bvect{x}')
              \,e^{-g v(\svect{x},\bomega)} d^n\bvect{x}
  \nonumber \\
  &= \frac{1}{|a_1 \ldots a_n|}\int_{\R^{p,q}} e^{-f u(\svect{y}^{\backprime},\bomega)} h(\bvect{y})
              \,e^{-g v(\svect{y}^{\backprime},\bomega)} d^n\bvect{y}
  \nonumber \\
  &= \frac{1}{|a_1 \ldots a_n|}\int_{\R^{p,q}} e^{-f u(\svect{y},\bomega_d)} h(\bvect{y})
              \,e^{-g v(\svect{y},\bomega_d)} d^n\bvect{y}
  \nonumber \\
  &= \frac{1}{|a_1 \ldots a_n|}\mathcal{F}^{f,g}\{ h \}(\bomega_d),              
\end{align}
where we substituted $\bvect{y}=\bvect{x}'=\sum_{k=1}^n a_kx_k\bvect{e}_k$ and  $\bvect{x} = \sum_{k=1}^n \frac{1}{a_k}y_k\bvect{e}_k = \bvect{y}^{\backprime}$ for the third equality. Note that in this step each negative $a_k<0, 1\leq k \leq n$, leads to a factor $\frac{1}{|a_k|}$, because the negative sign is absorbed by interchanging the resulting integration boundaries $\{+\infty, -\infty\}$ to $\{-\infty, +\infty\}$. For the fourth equality we applied the assumption $u(\bvect{y}^{\backprime},\bomega) = u(\bvect{y},\bomega^{\backprime})$, $v(\bvect{y}^{\backprime},\bomega) = v(\bvect{y},\bomega^{\backprime})$ and defined $\bomega_d=\bomega^{\backprime}=\sum_{k=1}^n \frac{1}{a_k}\omega_k\bvect{e}_k$. 
\end{proof}

\begin{cor}[Isotropic dilation]
\label{cor:isodil}
For $a_1= \ldots = a_n = a \in \R\setminus\{0\}$ Proposition \ref{prp:dil} simplifies under the same special assumption for $u,v$ to 
\be 
  \mathcal{F}^{f,g}\{ h_d \}(\bomega)
  = \frac{1}{|a|^n}\mathcal{F}^{f,g}\{ h \}(\frac{1}{a} \bomega) . 
\end{equation} 
\end{cor}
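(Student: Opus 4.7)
The plan is to obtain this corollary as an immediate specialization of Proposition \ref{prp:dil}, so no new analytic content is required. I would simply substitute $a_1 = a_2 = \cdots = a_n = a$ throughout the hypotheses and conclusion of Proposition \ref{prp:dil} and observe that everything collapses to the isotropic form stated.

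First I would check that the special assumption on $u, v$ demanded by Proposition \ref{prp:dil} remains satisfied. In the isotropic case $\vect{x}' = a\vect{x}$ and $\bomega' = a\bomega$, so the hypothesis reduces to $u(a\vect{x}, \bomega) = u(\vect{x}, a\bomega)$ and $v(a\vect{x}, \bomega) = v(\vect{x}, a\bomega)$; this is exactly the ``same special assumption for $u,v$'' referred to in the statement of the corollary, so Proposition \ref{prp:dil} applies.

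Next I would simplify the two quantities that appear on the right-hand side of Proposition \ref{prp:dil}. The Jacobian factor collapses via
\begin{equation*}
  \frac{1}{|a_1 a_2 \cdots a_n|} = \frac{1}{|a|^n},
\end{equation*}
and the dilated frequency vector collapses via
\begin{equation*}
  \bomega_d = \sum_{k=1}^n \frac{1}{a_k}\omega_k \vect{e}_k = \frac{1}{a}\sum_{k=1}^n \omega_k \vect{e}_k = \frac{1}{a}\bomega.
\end{equation*}

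Finally, inserting these two simplifications into the conclusion of Proposition \ref{prp:dil} yields the claimed identity $\mathcal{F}^{f,g}\{h_d\}(\bomega) = |a|^{-n}\mathcal{F}^{f,g}\{h\}(a^{-1}\bomega)$. There is no genuine obstacle: the only thing worth being careful about is to state explicitly that the isotropic hypothesis on $u,v$ is a legitimate instance of the general one needed for Proposition \ref{prp:dil}, which I would do before invoking that proposition.
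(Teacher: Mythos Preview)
Your proposal is correct and follows exactly the approach the paper intends: the paper does not give an explicit proof of this corollary at all, treating it as an immediate specialization of Proposition~\ref{prp:dil} obtained by setting $a_1=\cdots=a_n=a$, which is precisely what you do.
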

\noindent
Note, that the above assumption for $u,v$ in Proposition \ref{prp:dil} and Corollary \ref{cor:isodil} would, e.g., be fulfilled for $u(\bvect{x},\bomega) = v(\bvect{x},\bomega) = \bvect{x}\ast\widetilde{\bomega}=\sum_{k=1}^n x_k\omega_k$. 

\begin{prop}[Power factors]
For $f,g$ power factors in $h_{p,q}(\bvect{x}) = f^p h(\bvect{x}) g^q$, $p,q \in \Z$, $h \in L^1(\R^{p,q}; \cl{p,q})$, we obtain
\be 
  \mathcal{F}^{f,g}\{ h_{p,q} \}(\bomega)
  = f^p \mathcal{F}^{f,g}\{ h \}(\bomega) g^q .
\end{equation} 
\end{prop}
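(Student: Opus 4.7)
The plan is to substitute $h_{p,q}(\bvect{x}) = f^p\, h(\bvect{x})\, g^q$ directly into Definition \ref{df:CFTfg} and then migrate the constants $f^p$ and $g^q$ through the exponential kernels to the outside of the integral; this is essentially a short commutation-and-extraction argument.

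First I would write out the integrand as $e^{-f u(\svect{x},\bomega)}\, f^p\, h(\bvect{x})\, g^q\, e^{-g v(\svect{x},\bomega)}$ and then observe the key commutation relation: since $f^2 = -\unitm$, every integer power $f^p$ lies in the commutative subalgebra $\R\langle \unitm, f\rangle$, and the Taylor expansion $e^{-f u(\svect{x},\bomega)} = \cos u(\svect{x},\bomega) - f \sin u(\svect{x},\bomega)$ lies in that same subalgebra. Hence $f^p$ commutes with $e^{-f u(\svect{x},\bomega)}$. By the identical argument applied to $g$, the factor $g^q$ commutes with $e^{-g v(\svect{x},\bomega)}$.

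Once $f^p$ has been pushed to the leftmost position and $g^q$ to the rightmost position of the integrand, both are independent of the integration variable $\bvect{x}$ and can be pulled outside the integral, yielding exactly $f^p\, \mathcal{F}^{f,g}\{h\}(\bomega)\, g^q$.

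I do not expect any serious obstacle: the only point that really needs justification is the commutation step, and it follows at once from the observation that each of $f^p$ and $e^{-fu}$ is a polynomial in $f$ alone (and similarly for $g^q$ and $e^{-gv}$). The proposition is thus a direct consequence of $f^2 = g^2 = -\unitm$ together with the bilateral structure of the CFT kernel, and no assumptions on $u,v$ beyond those already built into Definition \ref{df:CFTfg} are required.
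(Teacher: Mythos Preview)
Your proposal is correct and matches the paper's proof essentially step for step: substitute $h_{p,q}$ into Definition~\ref{df:CFTfg}, commute $f^p$ past $e^{-fu}$ and $g^q$ past $e^{-gv}$ because each exponential is a function of $f$ (resp.\ $g$) alone, then extract the constants from the integral. The paper justifies the commutation via the power-series expansion of the exponentials rather than the $\cos u - f\sin u$ form you use, but this is the same observation in a different guise.
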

\begin{proof}
By direct computation we find
\begin{align}
  \mathcal{F}^{f,g}\{ h_{p,q} \}(\bomega)
  &= \int_{\R^{p,q}} e^{-f u(\svect{x},\bomega)} h_{p,q}(\bvect{x})
              \,e^{-g v(\svect{x},\bomega)} d^n\bvect{x}
  \nonumber \\
  &= \int_{\R^{p,q}} e^{-f u(\svect{x},\bomega)} f^p h(\bvect{x})\, g^q
              \,e^{-g v(\svect{x},\bomega)} d^n\bvect{x}
  \nonumber \\
  &= f^p \int_{\R^{p,q}} e^{-f u(\svect{x},\bomega)} h(\bvect{x})
              \,e^{-g v(\svect{x},\bomega)} d^n\bvect{x} \,g^q
  \nonumber \\
  &= f^p \mathcal{F}^{f,g}\{ h \}(\bomega) g^q ,
\end{align}
where we have used $e^{-f u(\svect{x},\bomega)} f^p=f^p e^{-f u(\svect{x},\bomega)}$ and $g^q\,e^{-g v(\svect{x},\bomega)} = e^{-g v(\svect{x},\bomega)}g^q$ for the third equality, which is obvious from $e^{-f u} = \sum_{k=1}^{\infty}\frac{(-fu)^k}{k!}$ and $e^{-g v} = \sum_{k=1}^{\infty}\frac{(-gv)^k}{k!}$ with $u,v \in \R$. As an alternative to this proof, we could also apply \eqref{eq:leftlin} with $\alpha = f^p, \beta = 0$ followed by \eqref{eq:rightlin} with $\alpha = g^p, \beta = 0$. 
\end{proof}

\subsection{CFT inversion, moments, derivatives, Plancherel, Parseval}

For establishing an inversion formula, moment and derivative properties, Plancherel and Parseval identities, certain \textit{assumptions} about the phase functions $u(\bvect{x},\bomega)$, $v(\bvect{x},\bomega)$ need to be made. One possibility is, e.g. to arbitrarily partition the scalar product $\bvect{x}\ast\widetilde{\bomega} = \sum_{l=1}^n x_l\omega_l = u(\bvect{x},\bomega) + v(\bvect{x},\bomega)$, with 
\be 
  u(\bvect{x},\bomega) = \sum_{l=1}^k x_l\omega_l, \qquad
  v(\bvect{x},\bomega) = \sum_{l=k+1}^n x_l\omega_l,
  \label{eq:assumeuv}
\end{equation} 
for any arbitrary but fixed $1 \leq k \leq n$. We could also include any subset $A_u \subseteq \{1, \ldots , n\}$ of coordinates in $u(\bvect{x},\bomega)$ and the complementary set $A_v = \{1, \ldots , n\}\setminus A_u$ of coordinates in $v(\bvect{x},\bomega)$, etc. \eqref{eq:assumeuv} will be assumed for the current subsection.

We then get the following \textit{inversion} theorem.
\begin{thm}[CFT inversion]
With the assumption \eqref{eq:assumeuv} for $u,v$ we get for $h \in L^1(\R^{p,q}; \cl{p,q})$, that
\begin{equation}
  \label{eq:invCFTfg}
  h(\bvect{x})=\mathcal{F}_{-1}^{f,g}\{ \mathcal{F}^{f,g}\{ h \} \}(\bvect{x})
  = \frac{1}{(2\pi)^n}\int_{\R^{p,q}} e^{f u(\svect{x},\bomega)} \mathcal{F}^{f,g}\{ h \}(\bomega) 
              \,e^{g v(\svect{x},\bomega)} d^n\bomega,
\end{equation}
where $d^n\bomega = d\omega_1\ldots d\omega_n$, $\bvect{x}, \bomega \in \R^{p,q}$. For the existence of (\ref{eq:invCFTfg}) we further need $\mathcal{F}^{f,g}\{ h \}\in L^1(\R^{p,q}; \cl{p,q})$.
\end{thm}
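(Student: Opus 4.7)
The plan is to reduce the two-sided CFT with the block-decomposed phase \eqref{eq:assumeuv} to a pair of one-sided transforms and then to classical $n$-dimensional Fourier inversion. First I would apply the $\pm$ split $h = h_+ + h_-$ (Definition \ref{df:genOPS}) and use linearity \eqref{eq:CFTsplit} together with Theorem \ref{th:fpmtrafo} to write
\begin{equation*}
\mathcal{F}^{f,g}\{h\}(\bomega)
= \int_{\R^{p,q}} h_+(\bvect{y})\, e^{-g(v-u)}\,d^n\bvect{y}
+ \int_{\R^{p,q}} h_-(\bvect{y})\, e^{-g(v+u)}\,d^n\bvect{y}.
\end{equation*}
A short check using $fh_{\pm}g = \pm h_{\pm}$ shows that each summand inherits the $\pm$ split property, i.e.\ $f\,\mathcal{F}^{f,g}\{h_{\pm}\}\, g = \pm\,\mathcal{F}^{f,g}\{h_{\pm}\}$.

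Next I would apply the inverse-transform candidate on the right-hand side of \eqref{eq:invCFTfg} term by term. Because $\mathcal{F}^{f,g}\{h_{\pm}\}$ satisfies the $\pm$ property, identity \eqref{eq:expqexp} collapses the two-sided kernel:
\begin{equation*}
e^{f u(\svect{x},\bomega)}\,\mathcal{F}^{f,g}\{h_{\pm}\}(\bomega)\,e^{g v(\svect{x},\bomega)}
= \mathcal{F}^{f,g}\{h_{\pm}\}(\bomega)\, e^{g\,(v(\svect{x},\bomega)\mp u(\svect{x},\bomega))}.
\end{equation*}
Inserting the quasi-complex formula for $\mathcal{F}^{f,g}\{h_{\pm}\}$ and invoking Fubini (using $h, \mathcal{F}^{f,g}\{h\}\in L^1$), the $\pm$ part of the right-hand side of \eqref{eq:invCFTfg} becomes
\begin{equation*}
\frac{1}{(2\pi)^n}\int_{\R^{p,q}}\!\!\int_{\R^{p,q}} h_{\pm}(\bvect{y})\,
e^{\,g\,[(v-v')\mp(u-u')]} \,d^n\bvect{y}\,d^n\bomega,
\end{equation*}
where $u' = u(\bvect{y},\bomega)$, $v' = v(\bvect{y},\bomega)$.

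Under the assumption \eqref{eq:assumeuv}, the exponent is linear in $\bomega$, explicitly
\begin{equation*}
(v-v')\mp(u-u')
= \sum_{l=k+1}^n (x_l-y_l)\omega_l \;\mp\; \sum_{l=1}^{k}(x_l-y_l)\omega_l,
\end{equation*}
so after the sign-flip substitution $\omega_l \mapsto -\omega_l$ for $l\le k$ (which has unit Jacobian on $\R^n$) the kernel becomes the standard Euclidean $n$-dimensional complex exponential $e^{g\,(\bvect{x}-\bvect{y})\cdot \bomega}$. Since $\mathrm{span}_{\R}\{1,g\}\cong\mathbb{C}$ and all the Clifford coefficients of $h_{\pm}$ are real-valued, the classical $n$-dimensional Fourier inversion theorem applies coefficientwise and yields $(2\pi)^n h_{\pm}(\bvect{x})$. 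Summing the two contributions gives $h_+(\bvect{x}) + h_-(\bvect{x}) = h(\bvect{x})$, which is the claim.

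The main obstacle is the penultimate step: carefully justifying that the reduction to classical Fourier inversion is legitimate, i.e.\ that the $L^1$ hypotheses on $h$ and $\mathcal{F}^{f,g}\{h\}$ survive the $\pm$ split (they do, since $h_{\pm}$ are $\R$-linear combinations of $h$ and $fhg$), that Fubini is applicable, and that Fourier inversion of a $\cl{p,q}$-valued function in the commuting subalgebra $\mathrm{span}_{\R}\{1,g\}$ really does reduce to the scalar case. The algebraic manipulations via \eqref{eq:expqexp} and the $\pm$ split are the engine that makes the two-sided integral behave like an iterated complex Fourier inversion.
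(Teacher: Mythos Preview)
Your argument is correct, but it follows a genuinely different route from the paper's own proof.

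The paper proceeds by direct computation without ever invoking the $\pm$ split: it inserts Definition \ref{df:CFTfg} into the inversion formula, uses the linearity of $u,v$ in $\bvect{x}$ to get exponents $u(\bvect{x}-\bvect{y},\bomega)$ and $v(\bvect{x}-\bvect{y},\bomega)$, and then exploits the crucial feature of \eqref{eq:assumeuv} that $u$ depends only on $\omega_1,\ldots,\omega_k$ while $v$ depends only on $\omega_{k+1},\ldots,\omega_n$. This disjointness lets the $\bomega$-integral factor into $n$ one-dimensional integrals, each producing a delta function via $\frac{1}{2\pi}\int_\R e^{f(x_l-y_l)\omega_l}\,d\omega_l=\delta(x_l-y_l)$ on the left for $l\le k$ and the analogous $g$-identity on the right for $l>k$. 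The left and right kernels never interact.

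Your approach instead absorbs the two-sidedness algebraically: the $\pm$ split together with \eqref{eq:expqexp} collapses the sandwich kernel to a one-sided $g$-kernel, after which a sign flip in $\omega_1,\ldots,\omega_k$ reduces everything to classical $n$-dimensional Fourier inversion in the commutative subalgebra $\mathrm{span}_\R\{1,g\}\cong\C$. What the paper's route buys is brevity and no need for the split machinery; what your route buys is a clean illustration of the philosophy behind Theorem \ref{th:fpmtrafo} and its remarks, namely that the two-sided CFT is always a pair of complex transforms in disguise. One small remark: your invocation of Fubini for the double $d^n\bvect{y}\,d^n\bomega$ integral is not literally justified (the integrand is not in $L^1(\R^{2n})$), but the paper's delta-function computation is at exactly the same formal level, so this is not a defect relative to the original.
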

\begin{proof}
By direct computation we find
\begin{align}
  &\frac{1}{(2\pi)^n}\int_{\R^{p,q}} e^{f u(\svect{x},\bomega)} 
    \mathcal{F}^{f,g}\{ h \}(\bomega)\,e^{g v(\svect{x},\bomega)} d^n\bomega
  \nonumber \\
  &=\frac{1}{(2\pi)^n}\int_{\R^{p,q}} \int_{\R^{p,q}} 
    e^{f u(\svect{x},\bomega)} e^{-f u(\svect{y},\bomega)}
    h(\bvect{y})
    \,e^{-g v(\svect{y},\bomega)} e^{g v(\svect{x},\bomega)}d^n\bvect{y}
    d^n\bomega
  \nonumber \\
  &=\frac{1}{(2\pi)^n}\int_{\R^{p,q}} \int_{\R^{p,q}} 
    e^{f u(\svect{x}-\svect{y},\bomega)} 
    h(\bvect{y})
    \,e^{g v(\svect{x}-\svect{y},\bomega)}
    d^n\bomega d^n\bvect{y}
  \nonumber \\
  &= \frac{1}{(2\pi)^n}\int_{\R^{p,q}}\int_{\R^{p,q}} e^{f \sum_{l=1}^k (x_l-y_l)\omega_l}
     h(\bvect{y})\,e^{g \sum_{m=k+1}^n (x_m-y_m)\omega_m}
     d^n\bomega d^n\bvect{y}
  \nonumber \\
  &= \frac{1}{(2\pi)^n}\int_{\R^{p,q}} \int_{\R^{p,q}}
     \prod_{l=1}^k e^{f (x_l-y_l)\omega_l}
     h(\bvect{y})\,\prod_{m=k+1}^n e^{g (x_m-y_m)\omega_m}
     d^n\bomega d^n\bvect{y}
  \nonumber \\
  &= \int_{\R^{p,q}} \prod_{l=1}^k \delta(x_l-y_l)\,
     h(\bvect{y})\,\prod_{m=k+1}^n \delta(x_m-y_m)
     d^n\bvect{y} 
  \nonumber \\
  &= h(\bvect{x}), 
\end{align}
where we have inserted Definition \ref{df:CFTfg} for the first equality, used the linearity of $u$ and $v$ according to \eqref{eq:assumeuv} for the second equality, as well as inserted \eqref{eq:assumeuv} for the third equality, and that $\frac{1}{2\pi}\int_{\R}e^{f (x_l-y_l)\omega_l}d\omega_l = \delta(x_l-y_l)$, $1\leq l \leq k$, and $\frac{1}{2\pi}\int_{\R}e^{g (x_m-y_m)\omega_m}d\omega_m = \delta(x_m-y_m)$, $k+1\leq m \leq n$, for the fifth equality. 
\end{proof}
\begin{rem}
An alternative proof for the CFT inversion can be constructed similar to the proof of Theorem 7 in \cite{BBSS:ConvHypCpxFT}, where a Hermite basis for $\mathcal{S}(\R^m)\otimes \cl{0,m}$ was used, with the Schwartz space $\mathcal{S}(\R^m)$. 
\end{rem}

Additionally, we get the transformation law for \textit{partial derivatives} in the following proposition.
\begin{prop}[Partial derivatives]
\label{prop:partd}
For $h'_l(\bvect{x})=\partial_{x_l}h(\bvect{x})$, $1 \leq l \leq n$, $h$ piecewise smooth and integrable, and $h, h'_l \in$ $L^1\left(\R^{p,q};\right.$ $\left. \cl{p,q}\right)$ we obtain
\be 
  \mathcal{F}^{f,g}\{ h'_l \}(\bomega)
  = 
  \left\{ 
  \begin{array}{ll}
  f \omega_l \,\mathcal{F}^{f,g}\{ h \}(\bomega),   & \quad \text{for}\,\,\, l\leq k \\
  \mathcal{F}^{f,g}\{ h \}(\bomega) \,g \,\omega_l, & \quad \text{for}\,\,\, l>k 
  \end{array}
  \right.
  .
\end{equation} 
\end{prop}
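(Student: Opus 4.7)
The plan is to prove the identity by applying the definition of $\mathcal{F}^{f,g}$ to $h'_l$ and then performing an integration by parts in the $x_l$ variable, exploiting the explicit product form of the kernel that assumption \eqref{eq:assumeuv} forces.

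First, I would insert $h'_l=\partial_{x_l}h$ into Definition \ref{df:CFTfg} and carry out the integration over $x_l$ first by Fubini, using that the integrability and piecewise smoothness of $h$ guarantee the boundary contributions $[e^{-fu}he^{-gv}]_{x_l=-\infty}^{x_l=+\infty}$ vanish. The crucial point is the split nature of the kernel: by \eqref{eq:assumeuv} the left exponential factors as $e^{-fu(\svect{x},\bomega)}=\prod_{m=1}^{k}e^{-fx_m\omega_m}$ and the right one as $e^{-gv(\svect{x},\bomega)}=\prod_{m=k+1}^{n}e^{-gx_m\omega_m}$, so $e^{-fu}$ depends on $x_l$ only when $l\leq k$, while $e^{-gv}$ depends on $x_l$ only when $l>k$.

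In the case $l\leq k$, I would differentiate $e^{-fu}$ in $x_l$; since all the factors of $e^{-fu}$ contain the same square root $f$ they commute pairwise, so $\partial_{x_l}e^{-fu}=-f\omega_l\,e^{-fu}=e^{-fu}(-f\omega_l)$, and $e^{-gv}$ is independent of $x_l$. Integration by parts then yields
\begin{align*}
\int_{\R} e^{-fu}\,\partial_{x_l}h\,e^{-gv}\,dx_l
&= -\int_{\R}\partial_{x_l}\bigl(e^{-fu}\bigr)h\,e^{-gv}\,dx_l
\\
&= f\omega_l\int_{\R} e^{-fu}h\,e^{-gv}\,dx_l.
\end{align*}
Since $f\omega_l$ is constant in $\bvect{x}$ it pulls out of the remaining $(n-1)$-fold integral on the left, giving $f\omega_l\,\mathcal{F}^{f,g}\{h\}(\bomega)$. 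In the case $l>k$ the roles are reversed: $e^{-fu}$ is $x_l$-independent, and $\partial_{x_l}e^{-gv}=e^{-gv}(-g\omega_l)$, with the factor $g\omega_l$ appearing on the right of $e^{-gv}$. Integration by parts then places $g\omega_l$ to the right of the whole integrand, producing $\mathcal{F}^{f,g}\{h\}(\bomega)\,g\omega_l$.

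The main obstacle, and the only place some care is needed, is the non-commutativity of $f$ and $g$: one must be careful to differentiate only the exponential that actually depends on $x_l$ and keep the non-scalar factor $f\omega_l$ or $g\omega_l$ on the correct side (left for $l\leq k$, right for $l>k$), since $\omega_l\in\R$ commutes with everything but $f$ and $g$ in general do not. The vanishing of the boundary terms is a routine consequence of $h,h'_l\in L^1$ together with piecewise smoothness (the kernel has unit modulus), and the product structure of the kernel from \eqref{eq:assumeuv} is what makes the argument go through factor by factor.
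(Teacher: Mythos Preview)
Your proposal is correct and follows essentially the same approach as the paper: insert the definition of $\mathcal{F}^{f,g}$, use the explicit form \eqref{eq:assumeuv} of $u$ and $v$ to see which exponential depends on $x_l$, and integrate by parts to pull out $f\omega_l$ on the left (for $l\le k$) or $g\omega_l$ on the right (for $l>k$). Your write-up is in fact a bit more explicit than the paper's about the product factorization of the kernels, the use of Fubini, and the vanishing of the boundary terms.
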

\begin{proof}
Assume $l\leq k$. Then we have
\begin{align}
  \mathcal{F}^{f,g}\{ h'_l \}(\bomega)
  &= \int_{\R^{p,q}} e^{-f u(\svect{x},\bomega)}
    h'_l(\bvect{x}) \,e^{-g v(\svect{y},\bomega)} d^n\bvect{x}
  \nonumber \\
  &= \int_{\R^{p,q}} e^{-f u(\svect{x},\bomega)}
    \partial_{x_l}h(\bvect{x}) \,e^{-g v(\svect{y},\bomega)} d^n\bvect{x}
  \nonumber \\
  &= \int_{\R^{p,q}} e^{-f \sum_{l=1}^k x_l\omega_l}
    \partial_{x_l}h(\bvect{x}) \,e^{-g v(\svect{y},\bomega)} d^n\bvect{x}
  \nonumber \\
  &= -\int_{\R^{p,q}} \partial_{x_l}\left( e^{-f \sum_{l=1}^k x_l\omega_l} \right)
    h(\bvect{x}) \,e^{-g v(\svect{y},\bomega)} d^n\bvect{x}
  \nonumber \\
  &= -(-f\omega_l)\int_{\R^{p,q}} e^{-f \sum_{l=1}^k x_l\omega_l}
    h(\bvect{x}) \,e^{-g v(\svect{y},\bomega)} d^n\bvect{x}
  \nonumber \\
  &= f \omega_l \mathcal{F}^{f,g}\{ h \}(\bomega),
\end{align}
where we inserted $u$ of \eqref{eq:assumeuv} for the third equality and performed integration by parts for the fourth equality. For $l > k$ the proof can similarly be done by insertion of $v$ of \eqref{eq:assumeuv} followed by partial integration.
\end{proof}

The CFT of \textit{spatial moments} is \textit{dual} to the transformation property of partial derivatives of Proposition \ref{prop:partd}.
\begin{prop}[Spatial moments]
The CFT of {spatial moments} with $h_l(\bvect{x})={x_l}h(\bvect{x})$, $1 \leq l \leq n$, $h, h_l \in L^1(\R^{p,q}; \cl{p,q})$, results in
\be 
  \label{eq:spacemom}
  \mathcal{F}^{f,g}\{ h_l \}(\bomega)
  = 
  \left\{ 
  \begin{array}{ll}
  f \,\partial_{\omega_l} \,\mathcal{F}^{f,g}\{ h \}(\bomega),  & \quad \text{for}\,\,\, l \leq k \\
  \partial_{\omega_l} \,\mathcal{F}^{f,g}\{ h \}(\bomega) \,g , & \quad \text{for}\,\,\, l > k 
  \end{array}
  \right.
  .
\end{equation}
\end{prop}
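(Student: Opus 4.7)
The plan is to prove this by differentiating the integral in Definition \ref{df:CFTfg} with respect to $\omega_l$ and moving the resulting scalar factor $x_l$ under the exponential kernels to build $h_l$. Under assumption \eqref{eq:assumeuv}, only one of the two kernels depends on $\omega_l$ (the left one if $l \leq k$, the right one if $l > k$), which is what produces the two cases.

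First, I would treat $l \leq k$. Here $\partial_{\omega_l} v(\bvect{x},\bomega) = 0$, while $\partial_{\omega_l} u(\bvect{x},\bomega) = x_l$. Because $u$ is scalar-valued, $e^{-fu}$ is a power series in $f$ with scalar coefficients and hence commutes with $f$; differentiating under the integral gives
\begin{align*}
  \partial_{\omega_l}\mathcal{F}^{f,g}\{h\}(\bomega)
  &= \int_{\R^{p,q}} \partial_{\omega_l}\bigl(e^{-fu(\svect{x},\bomega)}\bigr)\,h(\bvect{x})\,e^{-gv(\svect{x},\bomega)}\,d^n\bvect{x} \\
  &= -f\int_{\R^{p,q}} e^{-fu(\svect{x},\bomega)}\,x_l h(\bvect{x})\,e^{-gv(\svect{x},\bomega)}\,d^n\bvect{x}
  = -f\,\mathcal{F}^{f,g}\{h_l\}(\bomega),
\end{align*}
where $x_l$ is freely moved through $e^{-fu}$ as a scalar. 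Multiplying both sides on the left by $f^{-1}=-f$ gives the first case.

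Next, for $l > k$, the roles reverse: $\partial_{\omega_l} u=0$ and $\partial_{\omega_l} v = x_l$. Now $e^{-gv}$ commutes with $g$ for the same scalar-coefficient reason, and differentiation under the integral sign yields
\begin{align*}
  \partial_{\omega_l}\mathcal{F}^{f,g}\{h\}(\bomega)
  &= \int_{\R^{p,q}} e^{-fu(\svect{x},\bomega)}\,h(\bvect{x})\,\partial_{\omega_l}\bigl(e^{-gv(\svect{x},\bomega)}\bigr)\,d^n\bvect{x} \\
  &= \int_{\R^{p,q}} e^{-fu(\svect{x},\bomega)}\,x_l h(\bvect{x})\,e^{-gv(\svect{x},\bomega)}\,d^n\bvect{x}\,(-g)
  = -\mathcal{F}^{f,g}\{h_l\}(\bomega)\,g.
\end{align*}
Multiplying on the right by $g^{-1}=-g$ yields the second case.

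The only real obstacle is the bookkeeping of non-commutativity: the factor $-f$ (resp.\ $-g$) must be pulled to the \emph{outside} of the integral on the correct side, and one must justify that $x_l$ commutes through $e^{-fu}$ (resp.\ $e^{-gv}$). Both facts follow from the observation that $u,v$ are real-valued, so the kernels are power series with real coefficients in a single square root of $-1$. Differentiation under the integral sign itself is justified by the $L^1$ assumptions on $h$ and $h_l$ exactly as in the classical Fourier case. No additional ingredient beyond Definition \ref{df:CFTfg} and the partition \eqref{eq:assumeuv} is needed.
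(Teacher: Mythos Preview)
Your proof is correct, but it takes a genuinely different route from the paper. You differentiate the forward transform of Definition~\ref{df:CFTfg} directly under the integral sign and then solve for $\mathcal{F}^{f,g}\{h_l\}$. The paper instead invokes the CFT inversion formula \eqref{eq:invCFTfg}: it writes $-f x_l h(\bvect{x})$ as $-f x_l\,\mathcal{F}_{-1}^{f,g}\{\mathcal{F}^{f,g}\{h\}\}(\bvect{x})$, recognizes $f x_l e^{f u}$ as $\partial_{\omega_l}e^{f u}$ inside the inverse integral, integrates by parts in $\bomega$ to move $\partial_{\omega_l}$ onto $\mathcal{F}^{f,g}\{h\}$, and finally applies $\mathcal{F}^{f,g}$ to both sides. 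Your approach is shorter and avoids the inversion theorem entirely, which is an advantage in terms of logical economy; the paper's argument, by contrast, mirrors the proof of Proposition~\ref{prop:partd} and highlights the formal duality between spatial moments and partial derivatives via the inverse transform. One small wording slip: to cancel the factor $-f$ you should multiply by $(-f)^{-1}=f$, not by $f^{-1}=-f$; either way the conclusion $\mathcal{F}^{f,g}\{h_l\}=f\,\partial_{\omega_l}\mathcal{F}^{f,g}\{h\}$ is correct.
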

\begin{proof}
Assume $l\leq k$. We compute
\begin{align} 
  -f h_l(\bvect{x}) 
  &= -f x_l h(\bvect{x}) 
  = -f x_l \mathcal{F}_{-1}^{f,g}\{ \mathcal{F}^{f,g}\{ h \} \}(\bvect{x})
  \nonumber \\
  &= -f x_l \frac{1}{(2\pi)^n}\int_{\R^{p,q}} e^{f u(\svect{x},\bomega)}  
     \mathcal{F}^{f,g}\{ h \}(\bomega) \,e^{g v(\svect{x},\bomega)} d^n\bomega
  \nonumber \\
  &= -\frac{1}{(2\pi)^n}\int_{\R^{p,q}} f x_l e^{f \sum_{l=1}^k x_l\omega_l}  
     \mathcal{F}^{f,g}\{ h \}(\bomega) \,e^{g v(\svect{x},\bomega)} d^n\bomega
  \nonumber \\
  &= -\frac{1}{(2\pi)^n}\int_{\R^{p,q}} 
     \partial_{\omega_l} \left( e^{f \sum_{l=1}^k x_l\omega_l} \right) 
     \mathcal{F}^{f,g}\{ h \}(\bomega) \,e^{g v(\svect{x},\bomega)} d^n\bomega
  \nonumber \\
  &= \frac{1}{(2\pi)^n}\int_{\R^{p,q}} 
     e^{f \sum_{l=1}^k x_l\omega_l} 
     \left[\partial_{\omega_l} \mathcal{F}^{f,g}\{ h \}(\bomega)\right]
     \,e^{g v(\svect{x},\bomega)} d^n\bomega
  \nonumber \\
  &= \mathcal{F}_{-1}^{f,g} 
     \left[\partial_{\omega_l} \mathcal{F}^{f,g}\{ h \}\right](\bvect{x}),
  \label{eq:proofsmom}     
\end{align}
where we used the inversion formula \eqref{eq:invCFTfg} for the second equality, integration by parts for the sixth equality, and \eqref{eq:invCFTfg} again for the seventh equality. Moreover, by applying the CFT $\mathcal{F}^{f,g}$ to both sides of \eqref{eq:proofsmom} we finally obtain
\be 
   \mathcal{F}^{f,g}\{-f h_l\}(\bomega) 
   = \partial_{\omega_l} \mathcal{F}^{f,g}\{ h \}(\bomega)
   \,\,\, \Leftrightarrow \,\,\,
   \mathcal{F}^{f,g}\{h_l\}(\bomega) 
   = f\partial_{\omega_l} \mathcal{F}^{f,g}\{ h \}(\bomega),
\end{equation} 
because $\mathcal{F}^{f,g}\{-f h_l\}=-f\mathcal{F}^{f,g}\{h_l\}$. We can prove \eqref{eq:spacemom} for $l > k$ analogously. 
\end{proof}

We will next derive both \textit{Plancherel} and \textit{Parseval} identities for the CFT. 
\begin{prop}[Plancherel and Parseval identities]
For the functions $h_1, h_2, h \in L^2(\R^{p,q}; \cl{p,q})$, and assuming\footnote{Remember that in general for $\cl{p,q}\cong \M(2d,\C)$ or $\M(d,\H)$ or $\M(d,\H^2)$, or for both $f,g$ being blades in $\cl{p,q}\cong \M(2d,\R)$ or $\M(2d,\R^2)$, we always have $\widetilde{f}=-f$, $\widetilde{g}=-g$.} that $\widetilde{f}=-f$, $\widetilde{g}=-g$, we obtain the \textit{Plancherel} identity 
\be 
  \label{eq:Planch}
  \langle h_1, h_2 \rangle
  = \frac{1}{(2\pi)^n}\langle \mathcal{F}^{f,g}\{ h_1 \}, \mathcal{F}^{f,g}\{ h_2 \} \rangle,
\end{equation} 
as well as the \textit{Parseval} identity
\be 
  \left\|h\right\|
  = \frac{1}{(2\pi)^{n/2}}\left\| \mathcal{F}^{f,g}\{ h \} \right\| .
\end{equation} 
\end{prop}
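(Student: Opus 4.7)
The plan is to prove Plancherel directly by substituting the CFT definitions into the right-hand side and collapsing the $\bomega$-integral to a delta; Parseval then follows immediately.

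First I would unfold the symmetric scalar part on the right as
\[
  \frac{1}{(2\pi)^n}\int_{\R^{p,q}} \mathcal{F}^{f,g}\{h_1\}(\bomega)\ast \widetilde{\mathcal{F}^{f,g}\{h_2\}(\bomega)}\,d^n\bomega.
\]
Because reversion is an anti-involution and $u,v$ are real while $\widetilde f=-f$, $\widetilde g=-g$, one obtains $\widetilde{e^{-fu}}=e^{fu}$ and $\widetilde{e^{-gv}}=e^{gv}$, so
\[
  \widetilde{\mathcal{F}^{f,g}\{h_2\}(\bomega)}
  = \int_{\R^{p,q}} e^{gv(\svect{y},\bomega)}\,\widetilde{h_2(\bvect{y})}\,e^{fu(\svect{y},\bomega)}\,d^n\bvect{y}.
\]
Inserting this together with the definition of $\mathcal{F}^{f,g}\{h_1\}(\bomega)$ yields a triple integral over $\bvect{x},\bvect{y},\bomega$ of a six-factor product inside $Sc(\,\cdot\,)$.

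Next I would use the cyclic symmetry $Sc(AB)=Sc(BA)$ to bring the factor $e^{fu(\svect{y},\bomega)}$ to the far left. Because the two exponentials $e^{fu(\svect{y},\bomega)}$ and $e^{-fu(\svect{x},\bomega)}$ involve the same square root $f$ and real scalar exponents, they commute and combine into $e^{-fu(\svect{x}-\svect{y},\bomega)}$; the analogous combination happens for the $g$-exponentials. This reduces the integrand to
\[
  Sc\bigl[e^{-fu(\svect{x}-\svect{y},\bomega)}\,h_1(\bvect{x})\,e^{-gv(\svect{x}-\svect{y},\bomega)}\,\widetilde{h_2(\bvect{y})}\bigr].
\]
At this point I invoke the same assumption \eqref{eq:assumeuv} on $u,v$ used for the inversion theorem. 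Since then $u$ depends only on $\omega_1,\dots,\omega_k$ and $v$ only on $\omega_{k+1},\dots,\omega_n$, the $\bomega$-integral splits, and each one-dimensional integral gives a scalar delta via $\frac{1}{2\pi}\int_\R e^{-f(x_l-y_l)\omega_l}\,d\omega_l=\delta(x_l-y_l)$ (and the analogous identity with $g$). Because these factors are scalars, they pass freely through $h_1(\bvect{x})$ in the middle.

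Collapsing gives $(2\pi)^n\delta(\bvect{x}-\bvect{y})$ as the $\bomega$-integral, so integration in $\bvect{y}$ reduces the expression to $\int_{\R^{p,q}} Sc(h_1(\bvect{x})\widetilde{h_2(\bvect{x})})\,d^n\bvect{x}=\langle h_1,h_2\rangle$, which is the Plancherel identity. Parseval follows at once by specializing $h_1=h_2=h$ and observing $\|h\|^2=\langle h,h\rangle$, then taking square roots.

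The main obstacle is bookkeeping the non-commutativity when rearranging the six-factor integrand: one needs the cyclic-permutation property of $Sc$ together with the fact that same-$f$ (resp. same-$g$) exponentials with real scalar exponents commute, in order to combine the $e^{\pm fu}$ and $e^{\pm gv}$ pairs into $(\bvect{x}-\bvect{y})$-differences. Everything else is standard once the inversion-type assumption \eqref{eq:assumeuv} lets the $\bomega$-integral separate through the central multivector factor $h_1(\bvect{x})$.
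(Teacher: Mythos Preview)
Your proposal is correct and follows essentially the same route as the paper: substitute the CFT definitions, use $\widetilde f=-f$, $\widetilde g=-g$ to simplify the principal reverse, apply the cyclic symmetry of $Sc$ to merge the $f$- and $g$-exponentials into $(\bvect{x}-\bvect{y})$-differences, then invoke assumption \eqref{eq:assumeuv} so that the $\bomega$-integration yields scalar deltas on either side of $h_1(\bvect{x})$. The paper proceeds identically, differing only in cosmetic order (it computes $\langle \mathcal{F}^{f,g}\{h_1\},\mathcal{F}^{f,g}\{h_2\}\rangle$ first and divides by $(2\pi)^n$ at the end).
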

\begin{proof}
We only need to prove the Plancherel identity, because the Parseval identity follows from it by setting $h_1=h_2=h$ and by taking the square root on both sides. Assume that $\widetilde{f}=-f$, $\widetilde{g}=-g$. We abbreviate $\int=\int_{\R^{p,q}}$, and compute
\begin{align}
  &\langle \mathcal{F}^{f,g}\{ h_1 \}, \mathcal{F}^{f,g}\{ h_2 \} \rangle
  \nonumber \\
  &= \int \langle \mathcal{F}^{f,g}\{ h_1 \}(\bomega) 
  [\mathcal{F}^{f,g}\{ h_2 \}(\bomega)]^{\sim}\rangle d^n\bomega
  \nonumber \\
  &= \int\int\int \langle 
    e^{-f u(\svect{x},\bomega)} h_1(\bvect{x}) \,e^{-g v(\svect{x},\bomega)}
    d^n\bvect{x}
    [e^{-f u(\svect{y},\bomega)} h_2(\bvect{y}) 
    \,e^{-g v(\svect{y},\bomega)}d^n\bvect{y}]^{\sim}
    \rangle d^n\bomega
  \nonumber \\
  &= \int\int\int \langle 
    e^{-f u(\svect{x},\bomega)} h_1(\bvect{x}) \,e^{-g v(\svect{x},\bomega)}
    e^{-\widetilde{g} v(\svect{y},\bomega)} \widetilde{h_2(\bvect{y})}
    \,e^{-\widetilde{f} u(\svect{y},\bomega)}d^n\bvect{y}  
    \rangle d^n\bvect{x} d^n\bomega
  \nonumber \\
  &= \int\int\int \langle 
    e^{f u(\svect{y},\bomega)}e^{-f u(\svect{x},\bomega)} h_1(\bvect{x}) 
    \,e^{-g v(\svect{x},\bomega)}
    e^{g v(\svect{y},\bomega)} \widetilde{h_2(\bvect{y})}
    \,d^n\bomega \,d^n\bvect{y}  
    \rangle d^n\bvect{x} 
  \nonumber \\
  &= \int\int\int \langle 
    e^{-f u(\svect{x}-\svect{y},\bomega)} h_1(\bvect{x}) 
    \,e^{-g v(\svect{x}-\svect{y},\bomega)}
    \widetilde{h_2(\bvect{y})}
    \,d^n\bomega \,d^n\bvect{y} 
    \rangle d^n\bvect{x} 
  \nonumber \\
  &= (2\pi)^n \int\int\int \langle 
    \frac{e^{-f \sum_{l=1}^k (x_l-y_l)\omega_l}}{(2\pi)^k} h_1(\bvect{x}) 
    \frac{e^{-g \sum_{m=k+1}^n (x_m-y_m)\omega_m}}{(2\pi)^{(n-k)}}\,
    \widetilde{h_2(\bvect{y})}
    \,d^n\bomega \,d^n\bvect{y} 
    \rangle d^n\bvect{x} 
  \nonumber \\
  &= (2\pi)^n \int\int \langle 
    \prod_{l=1}^k\delta(x_l-y_l) \,h_1(\bvect{x}) 
    \,\prod_{m=k+1}^n\delta(x_m-y_m)\,
    \widetilde{h_2(\bvect{y})}
    \,d^n\bvect{y}  
    \rangle d^n\bvect{x}
  \nonumber \\
  &= (2\pi)^n \int \langle 
    h_1(\bvect{x})\widetilde{h_2(\bvect{x})}
    \rangle d^n\bvect{x}
  \nonumber \\ 
  &= (2\pi)^n \langle h_1,h_2 \rangle, 
\end{align}
where we inserted \eqref{eq:symsc} for the first equality, the Definition \ref{df:CFTfg} of the CFT $\mathcal{F}^{f,g}$ for the second equality, applied the principal reverse for the third equality, and the symmetry of the scalar product and that $\widetilde{f}=-f$, $\widetilde{g}=-g$ for the fourth equality, the linearity of $u$ and $v$ according to \eqref{eq:assumeuv} for the fifth equality, inserted the explicit forms of $u$ and $v$ of \eqref{eq:assumeuv} for the sixth equality, and that $\frac{1}{2\pi}\int_{\R}e^{f (x_l-y_l)\omega_l}d\omega_l = \delta(x_l-y_l)$, $1\leq l \leq k$, and $\frac{1}{2\pi}\int_{\R}e^{g (x_m-y_m)\omega_m}d\omega_m = \delta(x_m-y_m)$, $k+1\leq m \leq n$, for the seventh equality, and again \eqref{eq:symsc} for the last equality. Division of both sides with $(2\pi)^n$ finally gives the Plancherel identity \eqref{eq:Planch}.
\end{proof}

\subsection{Convolution}

The properties of convolutions subject to several types of Fourier transforms in Clifford algebra have been recently studied in \cite{BSH:GFTConv,BBSS:ConvHypCpxFT}. 
We define the \textit{convolution} of two multivector signals $a,b \in L^1(R^{p,q}; \cl{p,q})$ as
\be 
  (a\star b)(\bvect{x}) = \int_{\R^{p,q}}a(\bvect{y})b(\bvect{x}-\bvect{y})d^n\bvect{y},
  \label{eq:conv}
\end{equation} 
provided that the integral exists.
For establishing the general two-sided CFT of the convolution, we need the identity
\be 
  e^{\alpha f}e^{\beta g}
  = e^{\beta g}e^{\alpha f} 
    + [f,g]\sin(\alpha)\sin(\beta), 
  \qquad 
  [f,g]=fg-gf. 
  \label{eq:expexpid}
\end{equation} 
We further define the following two mixed exponential-sine transforms
\begin{align} 
  \mathcal{F}^{f,\pm s}\{ h \}(\bomega)
  &= \int_{\R^{p,q}} e^{-f u(\svect{x},\bomega)}h(\bvect{x}) (\pm 1)\sin(-v(\bvect{x},\bomega)) d^n\bvect{x},
  \label{eq:CFTexpsin}
  \\
  \mathcal{F}^{\pm s,g}\{ h \}(\bomega)
  &= \int_{\R^{p,q}} (\pm 1)\sin(-u(\bvect{x},\bomega)) h(\bvect{x}) e^{-g v(\svect{x},\bomega)} d^n\bvect{x}.
  \label{eq:CFTsinexp}
\end{align} 
We assume that the functions $u,v$ are both linear with respect to their first argument, but we do not assume the special forms of $u,v$ stated in \eqref{eq:assumeuv}.
\begin{thm}[CFT of convolution]
Let the functions $u,v$ be both linear with respect to their first argument. 
The \textit{general two-sided CFT of the convolution} (\ref{eq:conv}) of two functions $a,b \in L^1(R^{p,q}; \cl{p,q})$ can then be expressed as
\begin{align}
  &\mathcal{F}^{f,g}\{ a\star b \}(\bomega) = \nonumber \\
  &\phantom{+ } \mathcal{F}^{f,g}\{ a_{+f} \}(\bomega)  \mathcal{F}^{f,g}\{ b_{+g} \}(\bomega) 
  +  \mathcal{F}^{f,-g}\{ a_{+f} \}(\bomega) \mathcal{F}^{f,g}\{ b_{-g} \}(\bomega) 
  \nonumber \\
  &+ \mathcal{F}^{f,g}\{ a_{-f} \}(\bomega)  \mathcal{F}^{-f,g}\{ b_{+g} \}(\bomega)
  +  \mathcal{F}^{f,-g}\{ a_{-f} \}(\bomega) \mathcal{F}^{-f,g}\{ b_{-g} \}(\bomega)
  \label{eq:CFTconv}\\
  &+ \mathcal{F}^{f,s}\{ a_{+f} \}(\bomega)  [f,g] \mathcal{F}^{s,g}\{ b_{+g} \}(\bomega)
  +  \mathcal{F}^{f,-s}\{ a_{+f} \}(\bomega) [f,g] \mathcal{F}^{s,g}\{ b_{-g} \}(\bomega)
  \nonumber \\
  &+ \mathcal{F}^{f,s}\{ a_{-f} \}(\bomega)  [f,g] \mathcal{F}^{-s,g}\{ b_{+g} \}(\bomega)
  +  \mathcal{F}^{f,-s}\{ a_{-f} \}(\bomega) [f,g] \mathcal{F}^{-s,g}\{ b_{-g} \}(\bomega).
  \nonumber 
\end{align}
\end{thm}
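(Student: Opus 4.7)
The plan is to begin from Definition \ref{df:CFTfg} applied to $a\star b$, decouple the two arguments by a change of variables, split $a$ and $b$ using Lemma \ref{lm:fsplit} relative to $f$ and $g$ respectively, and then invoke the commutator identity \eqref{eq:expexpid} to transpose the remaining mixed kernel factors so that the double integral factors as products of transforms on the split pieces.

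Concretely, after inserting the convolution \eqref{eq:conv} into \eqref{eq:CFTfg} and substituting $\bvect{z} = \bvect{x}-\bvect{y}$, the assumed linearity of $u,v$ in their first argument yields the factorizations
\begin{align*}
e^{-fu(\svect{x},\bomega)} = e^{-fu(\svect{y},\bomega)}\,e^{-fu(\svect{z},\bomega)}, \qquad
e^{-gv(\svect{x},\bomega)} = e^{-gv(\svect{y},\bomega)}\,e^{-gv(\svect{z},\bomega)}.
\end{align*}
Writing $a = a_{+f}+a_{-f}$ and $b = b_{+g}+b_{-g}$ produces four sub-integrals. In each, I would push $a_{\pm f}$ past the $\bvect{z}$-dependent $f$-exponential (commuting through $a_{+f}$, flipping the exponent sign past $a_{-f}$) and push the $\bvect{y}$-dependent $g$-exponential past $b_{\pm g}$ in the same fashion. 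The $\bvect{y}$-dependent kernel factors then bracket $a_{\pm f}$ and the $\bvect{z}$-dependent ones bracket $b_{\pm g}$, leaving only a single mixed pair $e^{\epsilon_1 fu(\svect{z},\bomega)}\,e^{\epsilon_2 gv(\svect{y},\bomega)}$ stuck in between, with signs $\epsilon_1,\epsilon_2\in\{\pm 1\}$ recording the split type.

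At this stage I apply \eqref{eq:expexpid} to swap the two mixed exponentials, paying the correction $[f,g]\sin(\epsilon_1 u(\svect{z},\bomega))\sin(\epsilon_2 v(\svect{y},\bomega))$. The swapped portion lets the double integral factor as a product of two ordinary CFTs of $a_{\pm f}$ and $b_{\pm g}$ with appropriate $\pm f, \pm g$ kernel labels inherited from $\epsilon_1,\epsilon_2$, which over the four splits reproduces the first four terms of \eqref{eq:CFTconv}. In the correction portion, since the sines are scalars I can move them freely, placing all $\bvect{y}$-dependence to the left of $[f,g]$ and all $\bvect{z}$-dependence to its right; the integrals then separate into one mixed exponential-sine transform \eqref{eq:CFTexpsin} of $a_{\pm f}$, the factor $[f,g]$, and one sine-exponential transform \eqref{eq:CFTsinexp} of $b_{\pm g}$, where the $\pm s$ labels come from $\epsilon_1,\epsilon_2$ via $\sin(-x)=-\sin x$, yielding the remaining four terms.

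The main obstacle will be the sign bookkeeping: tracking, for each of the four splits, how the anticommutations of $a_{-f}$ and $b_{-g}$ determine $\epsilon_1,\epsilon_2$, how these signs propagate through \eqref{eq:expexpid} into the sine arguments, and how they are finally absorbed into the $\pm f$, $\pm g$, and $\pm s$ labels on the right-hand side. Once handled carefully for one representative combination, the other three follow by the same pattern, and collecting all contributions yields the eight summands of \eqref{eq:CFTconv}.
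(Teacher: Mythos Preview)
Your proposal is correct and follows essentially the same route as the paper: the paper likewise substitutes $\bvect{z}=\bvect{x}-\bvect{y}$, factors the kernels via linearity, splits $a=a_{+f}+a_{-f}$ and $b=b_{+g}+b_{-g}$, commutes the $\pm f$ and $\pm g$ parts past the appropriate exponentials, and then applies \eqref{eq:expexpid} to each of the four resulting middle pairs $e^{\epsilon_1 fu(\svect{z},\bomega)}e^{\epsilon_2 gv(\svect{y},\bomega)}$ to obtain the eight summands. The only difference is presentational: the paper writes out all four cases explicitly rather than doing one and arguing by symmetry.
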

\begin{proof}
\noindent
We now prove \eqref{eq:CFTconv}. 
\begin{align}
  &\mathcal{F}^{f,g}\{ a\star b \}(\bomega) 
  \nonumber \\
  &= \int_{\R^{p,q}} e^{-f u(\svect{x},\bomega)} (a\star b)(\bvect{x}) 
              \,e^{-g v(\svect{x},\bomega)} d^n\bvect{x}
  \nonumber \\
  &= \int_{\R^{p,q}} \int_{\R^{p,q}}
    e^{-f u(\svect{x},\bomega)} 
         a(\bvect{y})b(\bvect{x}-\bvect{y}) d^n\bvect{y} 
              \,e^{-g v(\svect{x},\bomega)} d^n\bvect{x}
  \nonumber \\
  &= \int_{\R^{p,q}} \int_{\R^{p,q}}
    e^{-f u(\svect{y}+\svect{z},\bomega)} 
         a(\bvect{y})  b(\bvect{z})           d^n\bvect{y} 
              \,e^{-g v(\svect{y}+\svect{z},\bomega)} d^n\bvect{z} 
  \nonumber \\
  &= \int_{\R^{p,q}} \int_{\R^{p,q}}
    e^{-f u(\svect{y},\bomega)} 
    e^{-f u(\svect{z},\bomega)}
         a(\bvect{y})  b(\bvect{z})           d^n\bvect{y} 
              \,e^{-g v(\svect{y},\bomega)}
                e^{-g v(\svect{z},\bomega)} d^n\bvect{z}
  \nonumber \\
  &= \int_{\R^{p,q}} \int_{\R^{p,q}}
    e^{-f u(\svect{y},\bomega)} 
    e^{-f u(\svect{z},\bomega)}
         [a_{+f}(\bvect{y})+a_{-f}(\bvect{y}) ] 
  \nonumber \\
  &\phantom{=\int_{\R^{p,q}} \int_{\R^{p,q}}} 
         [b_{+g}(\bvect{z})+b_{-g}(\bvect{z}) ] d^n\bvect{y} 
              \,e^{-g v(\svect{y},\bomega)}
                e^{-g v(\svect{z},\bomega)} d^n\bvect{z},
  \label{eq:proofconv1}
\end{align}
where we used the substitution
$\bvect{z}=\bvect{x}-\bvect{y}$, $\bvect{x}=\bvect{y}+\bvect{z}$. To simplify \eqref{eq:proofconv1} we expand the inner expression of the integrand and insert \eqref{eq:expexpid} to obtain
\begin{align}
  &e^{-f u(\svect{z},\bomega)}
         [a_{+f}(\bvect{y})+a_{-f}(\bvect{y}) ]  
         [b_{+g}(\bvect{z})+b_{-g}(\bvect{z}) ]          
              \,e^{-g v(\svect{y},\bomega)}
  \nonumber \\
  &= [a_{+f}(\bvect{y})e^{-f u(\svect{z},\bomega)}
     +a_{-f}(\bvect{y})e^{+f u(\svect{z},\bomega)} ]  
    [e^{-g v(\svect{y},\bomega)} b_{+g}(\bvect{z})
     + e^{+g v(\svect{y},\bomega)} b_{-g}(\bvect{z}) ]  
  \nonumber \\
  &= a_{+f}(\bvect{y}) e^{-f u(\svect{z},\bomega)}e^{-g v(\svect{y},\bomega)}
    b_{+g}(\bvect{z})
    + 
    a_{-f}(\bvect{y}) e^{+f u(\svect{z},\bomega)}e^{-g v(\svect{y},\bomega)}
    b_{+g}(\bvect{z}) 
    \nonumber \\
  &\phantom{=}+
    a_{+f}(\bvect{y}) e^{-f u(\svect{z},\bomega)}e^{+g v(\svect{y},\bomega)}
    b_{-g}(\bvect{z})   
    +
    a_{-f}(\bvect{y}) e^{+g v(\svect{y},\bomega)}e^{+f u(\svect{z},\bomega)}
    b_{-g}(\bvect{z})    
  \nonumber \\
  &= a_{+f}(\bvect{y}) \{e^{-g v(\svect{y},\bomega)}e^{-f u(\svect{z},\bomega)}
    +[f,g]\sin(-v)\sin(-u)\}
    b_{+g}(\bvect{z})
    \nonumber \\
  &\phantom{=}+ 
    a_{-f}(\bvect{y}) \{e^{-g v(\svect{y},\bomega)}e^{+f u(\svect{z},\bomega)}
    +[f,g]\sin(-v) (-1)\sin(-u)\}
    b_{+g}(\bvect{z}) 
    \nonumber \\
  &\phantom{=}+
    a_{+f}(\bvect{y}) \{e^{+g v(\svect{y},\bomega)}e^{-f u(\svect{z},\bomega)}
    +[f,g](-1)\sin(-v) \sin(-u)\}
    b_{-g}(\bvect{z})   
    \nonumber \\
  &\phantom{=}+
    a_{-f}(\bvect{y}) \{e^{+g v(\svect{y},\bomega)}e^{+f u(\svect{z},\bomega)}
    +[f,g](-1)\sin(-v) (-1)\sin(-u)\}
    b_{-g}(\bvect{z})  
  \nonumber \\
  &= a_{+f}(\bvect{y})e^{-g v(\svect{y},\bomega)}\,
    e^{-f u(\svect{z},\bomega)}b_{+g}(\bvect{z})
    + 
    a_{-f}(\bvect{y})e^{-g v(\svect{y},\bomega)}\,
    e^{+f u(\svect{z},\bomega)}b_{+g}(\bvect{z}) 
    \nonumber \\
  &\phantom{=}+
    a_{+f}(\bvect{y})e^{+g v(\svect{y},\bomega)}\,
    e^{-f u(\svect{z},\bomega)}b_{-g}(\bvect{z})   
    +
    a_{-f}(\bvect{y})e^{+g v(\svect{y},\bomega)}\,
    e^{+f u(\svect{z},\bomega)}b_{-g}(\bvect{z}) 
    \nonumber \\
  &\phantom{=}+
    a_{+f}(\bvect{y})\sin(-v)\, [f,g]\, \sin(-u) b_{+g}(\bvect{z})
    \nonumber \\
  &\phantom{=}+ 
    a_{-f}(\bvect{y})\sin(-v)\, [f,g]\, (-1)\sin(-u)b_{+g}(\bvect{z}) 
    \nonumber \\
  &\phantom{=}+
    a_{+f}(\bvect{y})(-1)\sin(-v)\, [f,g]\, \sin(-u) b_{-g}(\bvect{z})
    \nonumber \\
  &\phantom{=}+
    a_{-f}(\bvect{y})(-1)\sin(-v)\, [f,g]\, (-1)\sin(-u) b_{-g}(\bvect{z}).
  \label{eq:proofconv2}
\end{align}
Reinserting \eqref{eq:proofconv2} into \eqref{eq:proofconv1} and subsequently inserting the definitions \eqref{eq:CFTexpsin} and \eqref{eq:CFTsinexp} we get
\begin{align}
  &\mathcal{F}^{f,g}\{ a\star b \}(\bomega)
  \nonumber \\
  &= \int_{\R^{p,q}} 
    e^{-f u(\svect{y},\bomega)} 
    a_{+f}(\bvect{y})e^{-g v(\svect{y},\bomega)}\,d^n\bvect{y} 
    \int_{\R^{p,q}}
    e^{-f u(\svect{z},\bomega)}b_{+g}(\bvect{z})
    e^{-g v(\svect{z},\bomega)} d^n\bvect{z}
    \nonumber \\
  &\phantom{=}+ \int_{\R^{p,q}} 
    e^{-f u(\svect{y},\bomega)} 
    a_{-f}(\bvect{y})e^{-g v(\svect{y},\bomega)}\,d^n\bvect{y} 
    \int_{\R^{p,q}}
    e^{+f u(\svect{z},\bomega)}b_{+g}(\bvect{z}) 
    e^{-g v(\svect{z},\bomega)} d^n\bvect{z}
    \nonumber \\
  &\phantom{=}+ \int_{\R^{p,q}} 
    e^{-f u(\svect{y},\bomega)} 
    a_{+f}(\bvect{y})e^{+g v(\svect{y},\bomega)}\,d^n\bvect{y} 
    \int_{\R^{p,q}}
    e^{-f u(\svect{z},\bomega)}b_{-g}(\bvect{z})   
    e^{-g v(\svect{z},\bomega)} d^n\bvect{z}
    \nonumber \\
  &\phantom{=}+ \int_{\R^{p,q}} 
    e^{-f u(\svect{y},\bomega)} 
    a_{-f}(\bvect{y})e^{+g v(\svect{y},\bomega)}\,d^n\bvect{y} 
    \int_{\R^{p,q}}
    e^{+f u(\svect{z},\bomega)}b_{-g}(\bvect{z}) 
    e^{-g v(\svect{z},\bomega)} d^n\bvect{z}
    \nonumber \\
  &\phantom{=}+ \int_{\R^{p,q}} 
    e^{-f u(\svect{y},\bomega)} 
    a_{+f}(\bvect{y})\sin(-v)\,d^n\bvect{y}  \,[f,g]\, 
    \int_{\R^{p,q}} \sin(-u) b_{+g}(\bvect{z})
    e^{-g v(\svect{z},\bomega)} d^n\bvect{z}
    \nonumber \\
  &\phantom{=}+ \int_{\R^{p,q}} 
    e^{-f u(\svect{y},\bomega)} 
    a_{-f}(\bvect{y})\sin(-v)\,d^n\bvect{y}  \,[f,g]\, 
    \int_{\R^{p,q}} (-1)\sin(-u)b_{+g}(\bvect{z}) 
    e^{-g v(\svect{z},\bomega)} d^n\bvect{z}
    \nonumber \\
  &\phantom{=}+ \int_{\R^{p,q}} 
    e^{-f u(\svect{y},\bomega)} 
    a_{+f}(\bvect{y})(-1)\sin(-v)\,d^n\bvect{y}  \,[f,g]\, 
    \int_{\R^{p,q}} \sin(-u) b_{-g}(\bvect{z})
    e^{-g v(\svect{z},\bomega)} d^n\bvect{z}
    \nonumber \\
  &\phantom{=}+ \int_{\R^{p,q}} 
    e^{-f u(\svect{y},\bomega)} 
    a_{-f}(\bvect{y})(-1)\sin(-v)\,d^n\bvect{y}  \,[f,g]\, 
    \nonumber \\
  &\phantom{= + \int_{\R^{p,q}} }
    \int_{\R^{p,q}} (-1)\sin(-u) b_{-g}(\bvect{z})
    e^{-g v(\svect{z},\bomega)} d^n\bvect{z}
  \nonumber \\
  &= 
  \mathcal{F}^{f,g}\{ a_{+f} \}(\bomega)  \mathcal{F}^{f,g}\{ b_{+g} \}(\bomega) 
  +  \mathcal{F}^{f,-g}\{ a_{+f} \}(\bomega) \mathcal{F}^{f,g}\{ b_{-g} \}(\bomega) 
  \nonumber \\
  &\phantom{=}+ \mathcal{F}^{f,g}\{ a_{-f} \}(\bomega)  \mathcal{F}^{-f,g}\{ b_{+g} \}(\bomega)
  +  \mathcal{F}^{f,-g}\{ a_{-f} \}(\bomega) \mathcal{F}^{-f,g}\{ b_{-g} \}(\bomega)
  \\
  &\phantom{=}+ \mathcal{F}^{f,s}\{ a_{+f} \}(\bomega)  [f,g] \mathcal{F}^{s,g}\{ b_{+g} \}(\bomega)
  +  \mathcal{F}^{f,-s}\{ a_{+f} \}(\bomega) [f,g] \mathcal{F}^{s,g}\{ b_{-g} \}(\bomega)
  \nonumber \\
  &\phantom{=}+ \mathcal{F}^{f,s}\{ a_{-f} \}(\bomega)  [f,g] \mathcal{F}^{-s,g}\{ b_{+g} \}(\bomega)
  +  \mathcal{F}^{f,-s}\{ a_{-f} \}(\bomega) [f,g] \mathcal{F}^{-s,g}\{ b_{-g} \}(\bomega).
  \nonumber 
  \label{eq:proofconv3}
\end{align}
\end{proof}

\section{Conclusions}

We have established a comprehensive \textit{new mathematical framework} for the investigation and application of Clifford Fourier transforms (CFTs) together with \textit{new properties}. Our new CFTs form a more general class of CFTs, subsuming and generalizing previous results. We have applied new results on square roots of $-1$ in Clifford algebras to fully general construct CFTs, with two general square roots of $-1$ in Clifford algebras $\cl{p,q}$. The new CFTs are \textit{fully steerable} within the continuous Clifford algebra submanifolds of square roots of $-1$. We have thus  left the terra cognita of familiar transforms to outline the vast array of possible CFTs in $\cl{p,q}$.  

We first reviewed the recent results on \textit{square roots of $-1$} in Clifford algebras. We then showed how the $\pm$ split or orthogonal 2D planes split of quaternions can be generalized to \textit{split multivector signal functions} with respect to a general pair of square roots of $-1$ in Clifford algebra. Next, we defined the central notion of \textit{general two-sided Clifford Fourier transforms} with respect to any two square roots of $-1$ in Clifford algebra. Finally, we investigated important \textit{properties} of these new CFTs: linearity, shift, modulation, dilation, moments, inversion, derivatives, Plancherel and Parseval formulas, as well as a convolution theorem.

Regarding numerical implementations, Theorem \ref{th:fpmtrafo} shows that $2^n$ complex Fourier transformations (FTs) are sufficient. In some cases this can be reduced to $2^{(n-1)}$ complex FTs, e.g., when one of the two square roots of $-1$ is a pseudoscalar. Further algebraic studies may widen the class of CFTs, where $2^{(n-1)}$ complex FTs are sufficient. Numerical implementation is then possible with $2^n$ (or $2^{(n-1)}$) discrete complex FTs, which can also be fast Fourier transforms (FFTs), leading to fast CFT implementations. 

A well-known example of a CFT with two square roots of $-1$ are the quaternion FTs (QFTs) \cite{TAE:QFT2dLinPDF,EH:QFTgen,EH:OPS-QFT,TB:PhD,BFS:HypcFT,SJS:FTcolQuat,ES:HypCFTColIm}, which are particularly used in applications to partial differential systems, color image processing,
filtering, disparity estimation (two images differ by local translations), and texture segmentation. Another example is the spacetime FT, which leads to a multivector wave packet analysis of spacetime signals (e.g. electro-magnetic signals), applicable even to relativistic signals \cite{EH:QFTgen,EH:DirQFTUP}.

Depending on the choice of the phase functions $u(\bvect{x},\bomega)$ and $v(\bvect{x},\bomega)$, the multivector basis coefficient functions of the CFT result carry information on the symmetry of the signal, similar to the special case of the QFT \cite{TB:PhD}.

The convolution theorem allows to design and apply multivector valued filters to multivector valued signals. 

\section*{Acknowledgment}

E. H. thanks God: \textit{Soli deo gloria!}, his family, J. Helmstetter, R. Ab{\l}amowicz, S. Sangwine, the anonymous reviewers and the AGACSE 2012 organizers. He further thanks R. Bujack for very helpful comments.

\end{document}